\title{Linear actions of $\Z/p\times\Z/p$ on $S^{2n-1}\times S^{2n-1}$}
\author{Jim Fowler \and Courtney Thatcher}
\newcommand{\R}{\mathbb R}
\newcommand{\RR}{\mathbb R}
\newcommand{\Q}{\mathbb Q}
\newcommand{\C}{\mathbb C}
\newcommand{\Z}{\mathbb{Z}}
\newcommand{\ZZ}{\Z}
\newcommand{\Zmod}[1]{\mathbb Z_{/#1}}
\newcommand{\Zp}{\Zmod{p}}
\DeclareMathOperator{\Wh}{Wh}
\DeclareMathOperator{\Cat}{Cat}
\DeclareMathOperator{\Top}{Top}
\DeclareMathOperator{\PL}{PL}
\DeclareMathOperator{\GL}{GL}
\newcommand{\GModCat}{G/\hspace{-1.5pt}\Cat}
\newcommand{\GModTop}{G/\hspace{-1.5pt}\Top}
\newcommand{\GModPL}{G/\hspace{-1.5pt}\PL}
\newcommand{\defnword}[1]{\textbf{#1}}
\newtheorem{theorem}{Theorem}[section]
\newtheorem{corollary}[theorem]{Corollary}
\newtheorem{proposition}[theorem]{Proposition}
\newtheorem{lemma}[theorem]{Lemma}
\newtheorem{remark}[theorem]{Remark}
\begin{document}
\begin{abstract}
For an odd prime $p$, we consider free actions of $(\Zp)^2$ on $S^{2n-1}\times S^{2n-1}$ given by linear actions of $(\Zp)^2$ on $\R^{4n}$.  Simple examples include a lens space cross a lens space, but  $k$-invariant calculations show that other quotients exist.  Using the tools of Postnikov towers and surgery theory, the quotients are classified up to homotopy by the $k$-invariants and up to homeomorphism by the Pontrjagin classes.   We will present these results and demonstrate how to calculate the $k$-invariants and the Pontrjagin classes from the rotation numbers.
\end{abstract}
\maketitle

\section{Introduction}

There is a free, linear $\Zp$ action on $S^{2n-1}\subset\C^{n}$ given by $\gamma (x_1,\dots ,x_j) = (e^{2\pi i r_1/p}x_1,\dots ,e^{2\pi i r_j/p}x_j)$. The resulting quotient space is a called a  lens space $L =L(p;r_1,\dots ,r_n)$ and the $r_i$ are called rotation numbers. These spaces are well-known and well-studied.  More generally, one can define a free, linear action of $\Zp$ on a product of \textit{two} spheres, $S^{2n-1}\times S^{2m-1}$, by assigning a set of rotation numbers for each sphere separately. The classification of the resulting quotient spaces can be found in \cite{MR2721633}.

Work has also been done on which groups besides $\Zp$ can act freely on a product of spheres, but less is known about the classification of the quotient spaces associated to those groups known to act freely. In \cite{firstpaper}, free actions of $(\Zp)^2$ on $S^{2n-1}\times S^{2n-1}$ are classified up to homotopy equivalence.  In this paper, we consider free, linear, spherewise actions of $(\Zp)^2$ on $S^{2n-1}\times S^{2n-1}$ with $n > 1$ and with $p$ an odd prime, and classify the resulting quotient spaces up to piecewise linear or topological homeomorphism.  To be more precise about the ``spherewise linear'' actions under consideration, our classification follows a definition from Ray \cite{MR1161304} in which $(\Zp)^2$ acts linearly on each sphere separately with the action being free on at least one of the spheres. One example of such an action is that of a lens space cross a lens space, $L(p;r_{1},\dots ,r_{n})\times L(p;r'_{1},\dots ,r'_{n})$, where the first $\Zp$ acts on the first sphere and the second $\Zp$ factor acts on the second sphere.

There are, however, many more possibilities than lens space cross lens space. In Section~\ref{section:constructions}, we build those more general examples from the representation theory of $(\Zp)^2$.  Then in Section~\ref{section:homotopy} we recall the classification up to homotopy from~\cite{firstpaper} but recast for the specific case of linear spherewise actions, followed by the homeomorphism classification in Section~\ref{section:homeomorphism}.  The paper concludes with an application of the classification theorem presented in Section~\ref{section:application}.

\subsection*{Acknowledgements}

The authors thank the referee and the editors for many comments which significantly improved the paper.

\section{Constructions}
\label{section:constructions}

Throughout this paper, we assume $n > 1$.
The quotient space resulting from the free, linear $\Zp$ action on $S^{2n-1}\subset\C^{n}$ given by $\gamma (x_1,\dots ,x_j) = (e^{2\pi ir_1/p}x_1,\dots ,e^{2\pi ir_j/p}x_j)$ is a called a generalized lens space, often denoted by $L=L(p;r_1,\dots ,r_n)$.  We often conflate a group action with its quotient. The $r_i$ are called rotation numbers, and the homotopy, simple homotopy, and homeomorphism types of generalized lens spaces are determined by various combinations of products of rotation numbers.  We now generalize this type of action to the case of $(\Zp)^2$ acting on $S^{2n-1}\times S^{2n-1}$.

\subsection{Notation}
\label{section:constructon}

Let $\Gamma$ be the group $(\Zp)^2$, where $p \neq 2$ is a
prime.  The goal is to consider free actions of $\Gamma$ on
$S^{2n-1} \times S^{2n-1}$. 

Let $R = (r_1,\ldots,r_n,r'_1,\ldots,r'_n)$ and
$Q = (q_1,\ldots,q_n,q'_1,\ldots,q'_n)$ be elements of $(\Zp)^{2n}$ so
that the span of $R$ and $Q$ yield a $(\Zp)^2$ inside $(\Zp)^{2n}$. The parameters $r_1, \ldots r_n$ and $r'_1,\ldots,r'_n$ and $q_1,\ldots,q_n$ and $q'_1,\ldots,q'_n$ are called ``rotation
numbers'' in analogy with the case of a lens space.  To get an action on a product of spheres, note that the unit $(2n-1)$-sphere in $\R^{2n} \cong \C^n$ gives rise to a product of spheres $S^{2n-1} \times S^{2n-1}$ in the product $\C^n \times \C^n$, and then $R$ acts on $\C^n \times \C^n$ preserving these spheres via
\begin{align*}
R \cdot (z,z') &= (r,r') \cdot (z,z') \\
&= (r,r') \cdot (z_1,\ldots,z_n,z'_1,\ldots,z'_n) \\
&= \left( e^{2\pi i r_1/p} z_1, \ldots, e^{2\pi i r_1/p} z_n, e^{2\pi i r'_1/p} z'_1, \ldots e^{2\pi i r'_n/p} z'_n \right),
\end{align*}
and similarly for $Q$.  As a result, the group 
$(\Zp)^{2} \cong \langle R, Q \rangle$ acts on $S^{2n-1} \times S^{2n-1}$.  This action may not be free, but when the resulting action \textit{is} free, we call these the
\defnword{standard linear examples.}  Classifying these examples is the goal of this paper; we will not consider other possibilities for $(\Zp)^2$ actions on $S^{2n-1} \times S^{2n-1}$.

The tiniest bit of representation theory reveals that this
construction involving $4n$ rotation numbers captures every free
linear action of $(\Zp)^2$ on $S^{2n-1} \times S^{2n-1}$. 

\begin{proposition}
  \label{proposition:representation-theory}
  A representation of $(\Zp)^2$ on $\RR^{2n} \times \RR^{2n}$
  preserving the decomposition of $\RR^{4n}$ into
  $\RR^{2n} \times \RR^{2n}$ is equivalent to a standard linear
  example.
\end{proposition}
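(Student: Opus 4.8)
The plan is to reduce the statement to the classification of real representations of the finite abelian group $\Gamma = (\Zp)^2$ and then to recognize that classification as precisely the list of rotation numbers defining a standard linear example. First I would average an arbitrary inner product over $\Gamma$, so that without loss of generality $\Gamma$ acts orthogonally; since the given representation preserves the splitting $\RR^{4n} = \RR^{2n}\times\RR^{2n}$, the two factors can be analyzed independently, and it suffices to understand an orthogonal representation of $\Gamma$ on a single $\RR^{2n}$.

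Next I would invoke the representation theory of $\Gamma$. Because $\Gamma$ is abelian, every irreducible complex representation is one-dimensional, given by a character $\chi_{s,t}(a,b) = e^{2\pi i (sa+tb)/p}$ with $(s,t)\in(\Zp)^2$. Complex conjugation sends $\chi_{s,t}$ to $\chi_{-s,-t}$, and because $p$ is odd the only self-conjugate character is the trivial one: $(s,t)\equiv(-s,-t)$ forces $2s\equiv 2t\equiv 0$, hence $s\equiv t\equiv 0 \pmod p$. Consequently the nontrivial irreducible real representations of $\Gamma$ are all two-dimensional, one for each conjugate pair $\{\chi_{s,t},\chi_{-s,-t}\}$, while the trivial real irreducible is one-dimensional. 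By Maschke's theorem the given $\RR^{2n}$ decomposes as a direct sum of these pieces.

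I would then put a complex structure on each summand. On a two-dimensional irreducible summand a generator of $\Gamma$ acts as a rotation, and choosing a $\Gamma$-invariant complex structure $J$ identifies the summand with $\C$ on which $\Gamma$ acts by multiplication by $\chi_{s,t}$; that is, the $j$-th coordinate is scaled by $e^{2\pi i(a r_j + b q_j)/p}$ with $(r_j,q_j) = (s,t)$. The trivial part has even real dimension, because the total dimension $2n$ is even and the nontrivial part is a sum of even-dimensional pieces, so I can pair its real lines into copies of $\C$ carrying the trivial character, i.e. rotation numbers $r_j = q_j = 0$. Assembling these identifications gives a real-linear isomorphism $\RR^{2n}\xrightarrow{\sim}\C^n$ conjugating the $\Gamma$-action into diagonal form, and running the same argument on the second factor produces the primed numbers $r'_j,q'_j$. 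Together these are exactly the data $R$ and $Q$ of a standard linear example.

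The main thing to be careful about is the choice of invariant complex structure on each two-dimensional summand: the two choices $J$ and $-J$ identify the summand with $\chi_{s,t}$ or its conjugate $\chi_{-s,-t}$, so the pair $(r_j,q_j)$ is only well-defined up to a simultaneous sign. This ambiguity is harmless for the present statement, since any choice yields a standard model, but it is worth flagging because it is the same freedom that reappears when comparing rotation numbers of equivalent actions. A secondary point needing a line of justification is that the commuting family $\{\rho(g)\}_{g\in\Gamma}$ admits a simultaneous block-diagonalization; this is immediate from the isotypic decomposition guaranteed by Maschke rather than requiring a separate argument about commuting normal operators.
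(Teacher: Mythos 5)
Your proof is correct and follows essentially the same route as the paper: both classify the real irreducible representations of $(\Zp)^2$, using $p \neq 2$ to show that the nontrivial ones are two-dimensional rotation representations $W_{r,q}$, and then apply Maschke's theorem to each factor separately, pairing up trivial summands to absorb them as $W_{0,0}$. The only cosmetic difference is that you verify directly that no nontrivial character is self-conjugate (since $2s \equiv 0$ forces $s \equiv 0$ for $p$ odd), whereas the paper packages the same fact as a Frobenius--Schur indicator computation.
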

\begin{proof}
  Recall that ``equivalent'' for $(\Zp)^2$-representations $V$ and $W$ means there is an equivariant isomorphism of vector spaces $V \to W$.

  By virtue of its being abelian, $\Gamma$'s irreducible complex
  representations are one-dimensional.  Let $\chi$ be a character
  of an irreducible complex representation.  Since the map
  $x \mapsto x^2$ is a bijection and since $p \neq 2$, the
  Frobenius-Schur indicator of $\chi$ is
  \[
    \frac{1}{|\Gamma|} \sum_{\gamma \in \Gamma} \chi(\gamma^2) = 
    \frac{1}{|\Gamma|} \sum_{\gamma \in \Gamma} \chi(\gamma) = \begin{cases}
      1 & \mbox{if $\chi$ is trivial,} \\
      0 & \mbox{otherwise.}
    \end{cases}
  \]
  Consequently, a real irreducible representation of $\Gamma$ is
  either the trivial representation or a complex representation
  \cite[page 108]{MR0450380}.  In other words, a nontrivial real
  irreducible representation has character $\chi + \bar{\chi}$ for
  some complex irreducible representation with character $\chi$.  Then
  for $\Gamma = (\Zp)^2$ generated by $(1,0)$ and $(0,1)$, there are
  integers $r$ and $q$, not both zero, so that
  \begin{align*}
    \chi(1,0) &= e^{2\pi r i/p}, \\
    \chi(0,1) &= e^{2\pi q i/p}, \\
  \end{align*}
  and consequently $\chi + \bar{\chi}$ is the character of the real
  two-dimensional representation for which $(1,0)$ acts on $\RR^2$ by
  rotating through the angle $2\pi r/p$ and $(0,1)$ by the angle
  $2\pi q/p$.  Write $W_{r,q}$ for this representation.

  To finish, suppose $\Gamma := \Zp \times \Zp$ and $\Gamma$ acts on
  $\R^{2n} \oplus \R^{2n}$ preserving the decomposition, so the representation decomposes as $V_1 \oplus V_2$
  with $\dim V_1 = \dim V_2 = 2n$. Then by Maschke, the
  representation on $V_i$ decomposes into a sum of irreducible real
  representations.  Each real representation is either trivial or
  $W_{r,q}$ for some $r,q$.  After gathering together any trivial
  representations in pairs as $W_{0,0}$, we conclude that every linear
  representation of $\Gamma$ has the form
  \[
    W_{r_1,q_1} \oplus \cdots \oplus W_{r_n,q_n} \oplus 
    W_{r'_1,q'_1} \oplus \cdots \oplus W_{r'_n,q'_n},
  \]
  i.e., the form given at the beginning of
  Section~\ref{section:constructions}.
\end{proof}

For convenience, define
\[
  W_{R,Q} := W_{r_1,q_1} \oplus \cdots \oplus W_{r_n,q_n} \oplus 
  W_{r'_1,q'_1} \oplus \cdots \oplus W_{r'_n,q'_n},
\]
for the pair of $2n$-tuples $R = (r_1,\ldots,r_n,r'_1,\ldots,r'_n)$
and $Q = (q_1,\ldots,q_n,q'_1,\ldots,q'_n)$.  Also note that
Proposition~\ref{proposition:representation-theory} generalizes to
describe linear representations of $(\Zp)^m$ on $(S^{2n-1})^m$.

As a warning, note again that the action given by these linear
representations need not be a free action.  There is a 
criterion, however, which characterizes when the action is free. 

\begin{lemma}
  Let $B_{ij}$ be the codimensional two plane in the $\Zp$-vector
  space $(\Zp)^{2n}$ consisting of tuples
  $(x_1,\ldots,x_n,x'_1,\ldots,x'_n)$ where $x_i = 0$ and $x'_j = 0$.
  A standard linear example is free exactly when the two-dimensional
  plane $\langle R, Q \rangle \subset (\Zp)^{2n}$ only intersects any
  of the $B_{ij}$ at the origin.
\end{lemma}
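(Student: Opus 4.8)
The plan is to translate the geometric freeness condition into the stated linear-algebraic condition by analyzing, coordinate by coordinate, exactly when a group element can fix a point of $S^{2n-1}\times S^{2n-1}$. First I would fix a nonzero element of $\Gamma = \langle R, Q\rangle$, which is itself a vector $S = aR + bQ = (s_1,\ldots,s_n,s'_1,\ldots,s'_n) \in (\Zp)^{2n}$ for some $(a,b) \in (\Zp)^2$ not both zero. By construction its action on $\C^n \times \C^n$ multiplies the coordinate $z_k$ by $e^{2\pi i s_k/p}$ and the coordinate $z'_k$ by $e^{2\pi i s'_k/p}$. The immediate observation is then that a point $(z,z')$ is fixed by $S$ precisely when every coordinate $z_k$ with $z_k \neq 0$ satisfies $s_k \equiv 0 \pmod p$ and every coordinate $z'_k$ with $z'_k \neq 0$ satisfies $s'_k \equiv 0 \pmod p$.

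The key step is to exploit that this is a product action, so the existence of a fixed point on $S^{2n-1} \times S^{2n-1}$ decouples into the existence of a fixed point on each sphere separately. Since any point of $S^{2n-1}$ has at least one nonzero coordinate, $S$ fixes some point of the first sphere if and only if $s_i = 0$ for some index $i$ (take the point supported on that single coordinate), and likewise $S$ fixes a point of the second sphere if and only if $s'_j = 0$ for some index $j$. Combining these, $S$ has a fixed point on the product if and only if there are indices $i$ and $j$ with $s_i = 0$ and $s'_j = 0$ simultaneously, which is exactly the condition $S \in B_{ij}$ for some pair $(i,j)$.

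Finally I would assemble these facts into the criterion: the action fails to be free exactly when some nonzero $S \in \langle R, Q\rangle$ lies in one of the $B_{ij}$, so the action is free exactly when $\langle R, Q\rangle \cap B_{ij} = \{0\}$ for every $i$ and every $j$. The only point requiring care—which I would flag as the heart of the argument rather than a genuine obstacle—is the logical conjunction forced by needing a fixed point on \emph{both} spheres at once. It is precisely this ``and'' of two independent linear conditions $s_i = 0$ and $s'_j = 0$ that produces each $B_{ij}$ as an intersection of two hyperplanes, hence of codimension two, rather than the codimension-one planes one would see for a single-sphere lens space.
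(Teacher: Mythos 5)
Your proof is correct and follows essentially the same route as the paper: both identify a nonzero group element with the vector $S = aR + bQ$, observe that it rotates the $k$-th coordinate through $2\pi s_k/p$, and conclude that a fixed point on the product exists exactly when some $s_i$ and some $s'_j$ vanish simultaneously, i.e.\ $S \in B_{ij}$. The paper phrases this via the decomposition of the restricted representation $W_{R,Q}|_{\langle\gamma\rangle}$ into trivial summands, but the underlying argument is identical.
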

\begin{proof}
  Let $W_{R,Q} \cong \RR^{2n} \oplus \RR^{2n}$ be a representation of
  $\Gamma$ as above.  The action on $S^{2n-1} \times S^{2n-1}$
  associated to $W_{R,Q}$ is free exactly when, for all vectors
  $v = (v_1,v_2) \in \RR^{2n} \oplus \RR^{2n}$ with both $v_1 \neq 0$
  and $v_2 \neq 0$, the vector $v$ has trivial stabilizer.  Stated the
  other way, this means that for all $\gamma \in \Gamma$, the
  restriction of $W_{R,Q}$ from $\Gamma$ to $\langle \gamma \rangle$
  decomposes as a sum of representations---but there is a trivial
  representation in the decomposition of at most one term of
  $\RR^{2n} \oplus \RR^{2n}$.

  Let $\gamma = (\gamma_1,\gamma_2) \in (\Zp)^2$.  Then the
  two-dimensional real representation $W_{r,q}$ restricted to
  $\langle \gamma \rangle$ is the representation on $\RR^2$ by which
  $\gamma$ acts by rotation through an angle of
  $2 \pi (\gamma_1 r + \gamma_2 q) / p$.  So the action is free
  exactly when, for all nontrivial $(\gamma_1,\gamma_2)$, there are
  some $i$'s so that
  \[
    \gamma_1 r_i + \gamma_2 q_i \equiv 0 \mod p,
  \]
  or some $j$'s so that 
  \[
    \gamma_1 r'_j + \gamma_2 q'_j \equiv 0 \mod p,
  \]
  but no pair $(i,j)$ which satisfies both conditions.  This is
  exactly the condition given in the lemma.
\end{proof}
In analogy with the lens space case, when the action given by these linear representations is free, we write the quotient as $L(p,p; R, Q)$.

\section{Homotopy classification}
\label{section:homotopy}
  
The homotopy type of a CW complex in which the action of $\pi_1$ on all homotopy groups is trivial is determined by its homotopy groups and a sequence of cohomology classes called $k$-invariants. In the lens space case of free $\Zp$ actions on $S^{2n-1}$, all of the quotients for fixed $p$ and $n$  have the same homotopy groups, hence the homotopy type is determined by the $k$-invariants. It turns out that the first nontrivial $k$-invariant of a lens space is the product of its rotation numbers times a cohomology class in $H^{n+1}(\Zp;\Z)$, and it is the only $k$-invariant necessary to determine the homotopy type \cite{em}. It is also the case (cf.~\cite{firstpaper}) that the first nontrivial $k$-invariant determines the homotopy type of the $L(p,p;R,Q)$ .

In this section, we start with the cohomology ring for $K(\Zp\times\Zp,1)$, then we describe the $k$-invariant to provide the homotopy classification.  A torsion calculation reveals that the homotopy and simple homotopy classifications coincide.

\subsection{The first $k$-invariant and the homotopy type}

The first nontrivial $k$-invariant for a $(2n-1)$-dimensional lens space with rotation numbers $r_1,\dots, r_n$ is 
\[ k(L)=\prod_{i=1}^n (r_i a)\in H^{2n}(\Zp;\Z)
\]
where $a\in H^2(\Zp;\Z)$ is the generator \cite{em}. We think of the $(r_i a)$ as ``rotation classes'' and then the $k$-invariant is just the cup product of the rotation classes. The first nontrivial $k$-invariant in the case of linear $\Zp \times \Zp$ action on a product of equidimensional odd spheres can be defined similarly.

To begin, we provide the integral cohomology of the fundamental group, i.e., the cohomology of $K(\Zp\times\Zp,1)$, as the generators are needed to describe the first nontrivial $k$-invariant. The ring structure is well known and can be found in \cite{firstpaper}, \cite{MR675422}, and \cite{MR2712167}.

\begin{proposition}\label{proposition:zpzp-cohomology}
The integral cohomology ring of $K(\Zp\times\Zp,1)$ is
\[
  H^\star(K(\Zp\times\Zp,1);\Z)\cong \Z[a,b,c]/(pa,pb,pc,c^2)
\]
where $|a|=|b|=2$ and $|c|=3$.
\end{proposition}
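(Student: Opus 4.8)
The plan is to bootstrap from the cohomology of a single factor using the Künneth theorem and then fix the ring structure by a mod-$p$ comparison. First I would recall the standard computation $H^*(K(\Zp,1);\Z)\cong\Z[a]/(pa)$ with $|a|=2$ for $p$ odd, and use $K(\Zp\times\Zp,1)\simeq K(\Zp,1)\times K(\Zp,1)$. Writing $H^i=H^i(K(\Zp,1);\Z)$, I would invoke the cohomology Künneth short exact sequence (valid since each factor has finite-type cohomology)
\[
0 \to \bigoplus_{i+j=d} H^i\otimes H^j \to H^d(K(\Zp\times\Zp,1);\Z) \to \bigoplus_{i+j=d+1}\mathrm{Tor}(H^i,H^j)\to 0.
\]
A degree-by-degree count gives $(\Zp)^{m+1}$ in degree $2m$ (the tensor part, indexed by $a^ib^j$ with $i+j=m$) and $(\Zp)^{m}$ in degree $2m+1$ (the Tor part), the lowest being a single $\Zp$ in degree $3$ coming from $\mathrm{Tor}(H^2,H^2)\cong\Zp$; I call its generator $c$. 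This determines the additive structure, exhibits the generators $a,b$ (degree $2$) and $c$ (degree $3$), and shows every positive-degree group is elementary abelian, so $pa=pb=pc=0$.

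Next I would read off the relations that come for free. Since the cohomology cross product is an injective ring homomorphism onto the tensor part of the sequence, the subring generated by $a$ and $b$ is $H^*\otimes H^*\cong\Z[a,b]/(pa,pb)$, with no relation between $a$ and $b$ beyond $p$-torsion. The relation $c^2=0$ is then immediate from graded-commutativity: as $|c|=3$ is odd, $c^2=-c^2$, so $2c^2=0$; and since $c^2\in H^6$, which is $p$-torsion with $p$ odd, multiplication by $2$ is invertible there and $c^2=0$.

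The main obstacle is the remaining multiplicative bookkeeping: I must show that the odd-degree (Tor) classes are exactly the products $c\cdot a^ib^j$ and that, together with the $a^ib^j$, they form a basis, so that no further relations are hidden. This does not follow formally from Künneth, because the splitting of the Künneth sequence is not multiplicative. I would resolve it by reducing mod $p$, where the ring is completely understood: $H^*(K(\Zp\times\Zp,1);\Zp)\cong\Lambda(x_1,x_2)\otimes\Zp[y_1,y_2]$ with $|x_i|=1$, $|y_i|=2$, and Bockstein $\beta x_i=y_i$. Under reduction $a\mapsto y_1$, $b\mapsto y_2$, and $c\mapsto\beta(x_1x_2)=y_1x_2-x_1y_2$ (up to sign). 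A direct expansion shows that the reductions of the monomials $a^ib^j$ and $a^ib^jc$ are linearly independent over $\Zp$ in each degree; since reduction mod $p$ is injective on the elementary abelian groups $H^d(K(\Zp\times\Zp,1);\Z)$ and the number of these monomials matches the Künneth dimension count, they must form a basis. Assembling the tensor-part relations, the relation $c^2=0$, and this basis count yields precisely the presentation $\Z[a,b,c]/(pa,pb,pc,c^2)$.
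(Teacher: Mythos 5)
The paper does not actually prove this proposition: it simply records the ring structure as well known and cites the literature (\cite{firstpaper}, \cite{MR675422}, \cite{MR2712167}). Your proposal therefore cannot match the paper's proof, but it is a correct, self-contained derivation of the stated presentation. You correctly identify the one genuine subtlety --- that the K\"unneth splitting is additive but not multiplicative, so the tensor and Tor summands alone do not determine how the degree-$3$ class multiplies against $a$ and $b$ --- and you resolve it the standard way: reduce mod $p$, where the ring is $\Lambda(x_1,x_2)\otimes\Zp[y_1,y_2]$, identify $c$ (up to a unit, not just a sign) with the reduction of the integral Bockstein of $x_1x_2$, namely $y_1x_2-x_1y_2$, and check that the reductions of the monomials $a^ib^j$ and $c\,a^ib^j$ are independent; injectivity of reduction on the $p$-torsion groups $H^d$ for $d>0$ (Bockstein exact sequence) plus the K\"unneth dimension count then forces these monomials to be a basis. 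The remaining steps (the cohomology K\"unneth sequence, which applies since $K(\Zp,1)$ has finite type; $pa=pb=pc=0$; and $c^2=0$ from graded commutativity in odd degree together with $p$ odd) are all sound. The only cosmetic slips are that the additive count $(\Zp)^{m+1}$ in degree $2m$ should exclude $m=0$, where the group is $\Z$, and that ``up to sign'' for the reduction of $c$ should read ``up to a unit in $\Zp$''; neither affects the argument.
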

More specifically, the isomorphism in Proposition~\ref{proposition:zpzp-cohomology} is chosen so that $a$ and $b$ correspond to the standard basis $(1,0)$ and $(0,1)$ of $\Zp\times\Zp$. There is a copy of $\Zp \times \Zp \subset \Z[a,b] /(pa,pb)$ contained in the cohomology ring, and an automorphism $\varphi : \Zp\times\Zp \to \Zp\times\Zp$ gives rise to an automorphism $H^\star(\varphi)$ which, when restricted to the copy of $\Zp\times\Zp$, can be identified with $\varphi$.

Just as the first $k$-invariant for a lens space is given by the product of the rotation numbers modulo $p$, the first nontrivial $k$-invariant in the case of $L(p,p; R, Q)$ is the product of rotation classes in $H^2(\Zp\times\Zp; \Z)$.

\begin{lemma}[Lemma~5.1 in \cite{firstpaper}]
  Let $L = L(p,p;R,Q)$ and suppose $p > n$.  Then $k(L) \in H^{2n}(\Zp\times\Zp); \ZZ^2)$ is 
  \[
    \left( \prod_{i=1}^n (r_i a + q_i b), \prod_{i=1}^n (r'_i a + q'_i b) \right),
  \]
  where $a$ and $b$ are generators of $H^2(\Zp\times\Zp; \ZZ)$ as in Proposition~\ref{proposition:zpzp-cohomology}.
\end{lemma}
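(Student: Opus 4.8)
The plan is to present $L$ as the total space of a fibration over $B\Gamma = K(\Zp\times\Zp,1)$ and to extract $k(L)$ as a transgression that I will identify with a pair of Euler classes. Since the standard linear example is \emph{free}, the quotient is homotopy equivalent to the Borel construction,
\[
 L \;\simeq\; E\Gamma \times_\Gamma (S^{2n-1}\times S^{2n-1}),
\]
so there is a fibration $S^{2n-1}\times S^{2n-1}\to L\to B\Gamma$ whose fiber is $(2n-2)$-connected with $\pi_{2n-1}\cong\Z^2$. Writing $V_1 = W_{r_1,q_1}\oplus\cdots\oplus W_{r_n,q_n}$ and $V_2 = W_{r'_1,q'_1}\oplus\cdots\oplus W_{r'_n,q'_n}$ for the two defining complex representations, the two sphere factors are the unit spheres $S(V_1)$ and $S(V_2)$, and $L$ is the fiberwise product over $B\Gamma$ of the sphere bundles of the associated vector bundles $V_{i}\to (V_i)_{h\Gamma}\to B\Gamma$. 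Because $\Gamma$ acts by complex-linear, hence orientation-preserving, maps, it acts trivially on $\pi_{2n-1}(S^{2n-1}\times S^{2n-1})$, so the coefficients are untwisted and $k(L)\in H^{2n}(B\Gamma;\Z)\oplus H^{2n}(B\Gamma;\Z)$.

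First I would compute the first $k$-invariant via transgression. Running the Serre spectral sequence of $S^{2n-1}\times S^{2n-1}\to L\to B\Gamma$, the fiber has $H^{2n-1}\cong\Z\langle \iota_1,\iota_2\rangle$, where $\iota_i$ is the fundamental class of the $i$-th sphere factor, and, the fiber being $(2n-2)$-connected, the earliest differential acting on $\iota_i$ is the transgression $d_{2n}$ (all intermediate targets vanish). The transgression of the fundamental class of the fiber of a Postnikov stage is exactly the first $k$-invariant, so $d_{2n}(\iota_i)$ is the $i$-th component of $k(L)$; and for the sphere bundle of a vector bundle the transgression of the fiber class is the Euler class. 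Hence $d_{2n}(\iota_i)=e(V_i)$ up to orientation sign, giving $k(L)=\bigl(e(V_1),e(V_2)\bigr)$.

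It then remains to compute the Euler classes. Each $V_i$ is a sum of complex line representations, so its Euler class is the top Chern class, i.e.\ the product of the first Chern classes of the summands. The line bundle $W_{r,q}$ is classified by the character $(\gamma_1,\gamma_2)\mapsto e^{2\pi i(\gamma_1 r+\gamma_2 q)/p}$, which is the $r$-th tensor power on the first coordinate times the $q$-th tensor power on the second; since $c_1$ is additive under tensor product, $c_1(W_{r,q})=r a+q b$, where $a,b\in H^2(B\Gamma;\Z)$ are the generators of Proposition~\ref{proposition:zpzp-cohomology}. Multiplying the line summands gives $e(V_1)=\prod_{i=1}^n (r_i a+q_i b)$ and $e(V_2)=\prod_{i=1}^n (r'_i a+q'_i b)$, which assembles into the stated pair.

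The step I expect to be the main obstacle is the rigorous identification of the transgression with the first $k$-invariant, together with the claim that in the \emph{product} fibration the two fiber classes transgress independently, with no cross terms contributing to $d_{2n}$; here I would argue by naturality of $k$-invariants under the two projections $L\to (V_i)_{h\Gamma}$, each a map over $B\Gamma$ realizing one $\Z$-summand of $\pi_{2n-1}$, and I would fix orientation conventions to pin down signs. The hypothesis $p>n$ enters to keep the range clean: it is equivalent to $2p-3\ge 2n-1$, so that $2n-1$ lies at or below the first $p$-torsion stable stem of the spheres; this ensures that $\pi_{2n-1}(S^{2n-1}\times S^{2n-1})\cong\Z^2$ really is the first higher homotopy group with no $p$-primary obstruction appearing below degree $2n$, so that the displayed class is genuinely the first nontrivial $k$-invariant in the asserted form.
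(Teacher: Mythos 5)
Your argument is correct and is exactly the route the paper points to: the lemma is imported from \cite{firstpaper} without proof here, but the surrounding text (the citation of \cite{em} for the lens-space case and the later remark about ``the relationship between the $k$-invariant and the transgression'') indicates precisely your computation --- identify $L$ with the Borel construction, read off the first $k$-invariant as the transgression $d_{2n}(\iota_i)$ in the Serre spectral sequence, identify that transgression with the Euler class of each $V_i$, and evaluate the Euler class as the product of first Chern classes $r_ia+q_ib$ of the line summands $W_{r_i,q_i}$. The naturality argument under the two projections to the individual sphere bundles correctly disposes of the worry about cross terms. The one place your write-up is off target is the closing explanation of the hypothesis $p>n$: the computation of the \emph{first} $k$-invariant needs no such bound, since $S^{2n-1}\times S^{2n-1}$ is $(2n-2)$-connected and $\pi_{2n-1}\cong\Z^2$ by Hurewicz regardless of $p$; the bound on $p$ relative to $n$ is really needed downstream, where one argues (as in Proposition~\ref{proposition:homotopy-classification}) that this first $k$-invariant already determines the homotopy type, i.e.\ that no $p$-torsion in higher homotopy groups of the fiber obstructs extending a map of Postnikov stages. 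Stating that the hypothesis is inert for this lemma would be cleaner than the stable-stem justification you offer.
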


Combining this with Theorem 3.3 in \cite{firstpaper}, we have the following result.

\begin{proposition}[Theorem~3.3 in \cite{firstpaper}]
\label{proposition:homotopy-classification}
Assume the prime $p>3$ satisfies $p > n + 1$.  For free linear actions of $\Zp \times \Zp$ on
  $S^{2n-1} \times S^{2n-1}$ with a specified identification of the quotient's fundamental group with $\Zp \times \Zp$, the quantity 
    \[
    k(L)=\left( \prod_{i=1}^n (r_i a + q_i b), \prod_{i=1}^n (r'_i a + q'_i b) \right) \in H^{2n}(\Zp \times \Zp; \ZZ^2),
  \]
   modulo automorphisms of $\ZZ^2$, determines the homotopy type of the quotient.
 \end{proposition}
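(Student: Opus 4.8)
The plan is to identify the homotopy type of $L = L(p,p;R,Q)$ with its Postnikov tower and to show that, in the range dictated by $p > n+1$, everything above the first $k$-invariant is forced. Since $n > 1$, the universal cover of $L$ is $X = S^{2n-1}\times S^{2n-1}$, which is simply connected and $(2n-2)$-connected. Hence $\pi_1(L) \cong \Gamma = \Zp\times\Zp$, we have $\pi_i(L) = 0$ for $2 \le i \le 2n-2$, and $\pi_{2n-1}(L) \cong \pi_{2n-1}(X) \cong \ZZ^2$, with the $\pi_1$-action trivial (the deck transformations act through $SO(2n)$ on each sphere factor and so are homotopic to the identity). Thus the first Postnikov section is $P_1 L \simeq K(\Gamma,1)$, there is no further homotopy until degree $2n-1$, and the first nontrivial $k$-invariant lives in the untwisted group $H^{2n}(\Gamma; \ZZ^2)$, where by the preceding Lemma it equals the stated product of rotation classes.

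First I would dispatch the easy direction, that $k(L)$ modulo $\Aut(\ZZ^2)$ is a homotopy invariant of the $\pi_1$-marked space. A homotopy equivalence $f\colon L \to L'$ compatible with the chosen identifications of $\pi_1$ induces an isomorphism $f_*\colon \pi_{2n-1}(L) \to \pi_{2n-1}(L')$, that is, an element of $\Aut(\ZZ^2) = \GL_2(\ZZ)$; naturality of the first $k$-invariant then gives $f_*\, k(L) = k(L')$, so the two $k$-invariants agree up to $\GL_2(\ZZ)$.

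The substance is the converse: if $k(L)$ and $k(L')$ agree after an automorphism of $\ZZ^2$, then $L \simeq L'$. Because $L$ is a closed $(4n-2)$-manifold, it suffices to produce an equivalence of truncated towers $P_{4n-2}L \simeq P_{4n-2}L'$; a standard obstruction-theoretic lift then yields $L \to L'$ inducing isomorphisms on $\pi_i$ for $i \le 4n-2$, which on universal covers is a $(4n-1)$-connected self-map of the $(4n-2)$-dimensional complex $X$, hence an equivalence by Whitehead. I would build the tower equivalence by induction. Matching first $k$-invariants (after the $\GL_2(\ZZ)$ adjustment) gives an equivalence $P_{2n-1}L \simeq P_{2n-1}L'$ over $K(\Gamma,1)$. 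For the step from $P_{i-1}$ to $P_{i}$, with $2n \le i \le 4n-2$, the obstruction to extending the equivalence is the difference of $k$-invariants in $H^{i+1}(P_{i-1}L; \pi_i(X))$.

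The hard part will be showing these higher obstructions vanish, and this is exactly where the hypothesis $p > n+1$ enters. By Serre's theorem the first $p$-torsion in $\pi_*(S^{2n-1})$ occurs above degree $4n-2$ once $p > n+1$, so each $\pi_i(X)$ with $2n \le i \le 4n-2$ is finite of order prime to $p$. Restricting a $k$-invariant to the fiber $X$ recovers the corresponding $k$-invariant of the common universal cover, so the two restrictions agree; hence the difference class lies in positive filtration of the Serre spectral sequence $H^s(K(\Gamma,1); H^t(P_{i-1}X; \pi_i(X))) \Rightarrow H^{s+t}(P_{i-1}L; \pi_i(X))$. Using Cartan's computation of $H^*(K(\ZZ,2n-1))$, whose first $p$-torsion likewise lies above degree $4n-2$ when $p > n+1$, the coefficient groups $H^t(P_{i-1}X; \pi_i(X))$ have order prime to $p$ for all $t \le 4n-2$; since $K(\Gamma,1)$ classifies a $p$-group, every term $E_2^{s,t}$ with $s \ge 1$ and $t \le 4n-2$ vanishes. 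Thus the difference class, having filtration $\ge 1$ and total degree $\le 4n-2$, is zero, the equivalence extends, and the induction closes. I expect the remaining technical care to be in pinning down these two torsion thresholds precisely and in confirming that $\Aut(\ZZ^2)$ is the only residual indeterminacy once the $\pi_1$-marking is fixed.
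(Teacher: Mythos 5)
The paper does not actually prove this proposition---it is imported wholesale from Theorem~3.3 of \cite{firstpaper}---but your Postnikov-tower strategy (first nontrivial $k$-invariant in $H^{2n}(\Gamma;\Z^2)$, higher stages controlled by the hypothesis $p>n+1$ via Serre's $p$-torsion threshold $2n+2p-4>4n-2$ for $\pi_*(S^{2n-1})$) is the intended route, and your identification of the low stages of the tower, the triviality of the $\pi_1$-action, and the easy naturality direction are all correct.

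The gap is in the claim that the higher obstructions vanish. Your spectral-sequence argument does kill everything of positive filtration, since $H^s(K(\Gamma,1);A)=0$ for $s\geq 1$ when $A$ is finite of order prime to $p$; so the difference of $k$-invariants is detected by its restriction to the fiber $P_{i-1}X$, $X=S^{2n-1}\times S^{2n-1}$. But that restriction is not zero for formal reasons: it is $k_{i+1}(X)-\phi^{*}k_{i+1}(X)$, where $\phi$ is the self-equivalence of $P_{i-1}X$ induced by your tower equivalence, and $\phi$ realizes on $\pi_{2n-1}\cong\Z^2$ the automorphism $M\in\GL_2(\Z)$ chosen to match the first $k$-invariants. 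Its vanishing amounts to $M$ being compatible with the higher $k$-invariants of $S^{2n-1}\times S^{2n-1}$, and not every $M\in\GL_2(\Z)$ is realized by a self-equivalence of the product: writing $M=\left(\begin{smallmatrix}a&b\\ c&d\end{smallmatrix}\right)$, the composite of the attaching map of the top cell with a map of the wedge realizing $M$ is $ac[\iota_1,\iota_1]+bd[\iota_2,\iota_2]$ up to sign, and the Whitehead squares $[\iota_j,\iota_j]$ are nonzero $2$-torsion for $2n-1\neq 1,3,7$---a $2$-primary phenomenon untouched by $p>n+1$. (Already $M=\left(\begin{smallmatrix}1&1\\0&1\end{smallmatrix}\right)$ gives obstruction $[\iota_2,\iota_2]$.) Relatedly, your degree bookkeeping slips at the top of the tower: the obstructions live in degrees up to $4n-1$, and $H^{4n-2}(L;\pi_{4n-3}(X))\cong\pi_{4n-3}(X)\neq 0$ because $L$ is a closed orientable $(4n-2)$-manifold, so at least one obstruction group does not vanish and you must show the obstruction \emph{class} dies, e.g.\ by exploiting the indeterminacy in the choices on the $(2n-1)$-cells before extending over the top cell. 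This is precisely where the substantive work in \cite{firstpaper} has to happen, and your sketch does not yet close it.
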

Changing the specified identification of the fundamental group with $\Zp \times \Zp$ amounts to applying the same automorphism to $a$ and $b$, as in the discussion following Proposition~\ref{proposition:zpzp-cohomology}.

We compare Proposition~\ref{proposition:homotopy-classification} to the
corresponding statement for $\Zp$ actions on $S^n$.  In the lens
space case, the homotopy classification (with a specified generator of $\pi_1$) boils down to the
product of the rotation classes, i.e., rotation numbers, up to
automorphisms of $\ZZ$, i.e., up to sign. Often, the classification of lens spaces up to homotopy equivalence is described using a formula such as
\begin{equation}\label{eqn:lens-space-classification}
t^n r_1\dots r_n\equiv \pm r'_1\dots r'_n \pmod p,
\end{equation}
where $t$ is an element of $\Zp$ relatively prime to $p$, and $r$ and $r'$ are the rotation numbers corresponding to two lens spaces.
This ``numeric'' formula might be contrasted with the cohomological perspective presented in Proposition~\ref{proposition:homotopy-classification}.  The rotation numbers are now described as rotation classes; the power $t^n$ accounts for a change of identification of $\pi_1$ with $\Zp \times \Zp$,
which would be played by replacing $a$ and $b$ in the formula in 
Proposition~\ref{proposition:homotopy-classification}.  The sign in Equation~\eqref{eqn:lens-space-classification} corresponds to a choice of automorphism of $\Z$, the top dimensional cohomology group, which in Proposition~\ref{proposition:homotopy-classification} becomes an automorphism of $\Z^2$.

\subsection{Simple homotopy classification}
\label{section:simple-homotopy}

In the lens space case, the simple homotopy type is determined by the rotation numbers. Specifically, $L(p;r_1,\dots, r_n)\simeq_s L(p;r_1'\dots r_n')$ if and only if for some $k\in\Zp$ and permutation $\sigma$, $r_i\equiv k r'_{\sigma(i)}$ for all $i$ \cite{whiteheadtorsion}. 

In the case of $L(p,p;R,Q)$, because the quotients are even dimensional, the homotopy classification and the simple homotopy classification coincide.

\begin{proposition}\label{she}
Suppose $f:X\to Y$ is a homotopy equivalence between two of $(S^{2n-1}\times S^{2m-1})/G$'s, where $n\leq m$, and $G$ is a finite abelian group that acts freely on $S^{2n-1} \times 
S^{2m-1}$ and trivially on the cohomology of $S^{2n-1}\times S^{2m-1}$. If $\Wh(G)$ is torsion-free, then $f$ is a simple homotopy equivalence.
\end{proposition}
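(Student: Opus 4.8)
The plan is to show that the Whitehead torsion $\tau(f)\in Wh(G)$ vanishes, since a homotopy equivalence is simple precisely when its torsion is zero. Because $Wh(G)$ is assumed torsion free, it suffices to prove that $\tau(f)$ is a torsion element, i.e.\ that its image in $Wh(G)\otimes\Q$ is zero. Both quotients $M=(S^{2n-1}\times S^{2m-1})/G$ and $N$ are closed manifolds of dimension $d=(2n-1)+(2m-1)=2(n+m-1)$, which is \emph{even}, and they are orientable because $G$ acts trivially on the cohomology of the cover. This even dimension is exactly the feature I would exploit.

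First I would set up the detection of the free part of $Wh(G)$ by Reidemeister torsion. For finite $G$ the logarithmic regulators $x\mapsto\log\lvert\det\rho(x)\rvert$, as $\rho$ ranges over the nontrivial irreducible characters of $G$, assemble into a map $Wh(G)\otimes\RR\to\prod_\rho\RR$ that is injective (Bass); so $\tau(f)$ is torsion as soon as every such regulator annihilates it. Since $G$ acts trivially on $H_*(S^{2n-1}\times S^{2m-1})$, for every nontrivial character $\rho$ the twisted complex $C_*(\widetilde M)\otimes_{\Z G}\C_\rho$ is acyclic, so the Reidemeister torsions $\Delta_\rho(M),\Delta_\rho(N)\in\C^\times$ are defined. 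The variance of R-torsion under the (possibly non-simple) homotopy equivalence $f$ records precisely the image of $\tau(f)$: one has $\log\lvert\Delta_\rho(N)\rvert-\log\lvert\Delta_\rho(M)\rvert=\log\lvert\det\rho(\tau(f))\rvert$, the $\rho$-regulator of $\tau(f)$.

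The crux is then Milnor's duality theorem for Reidemeister torsion. For a closed oriented $d$-manifold with $\C_\rho$ acyclic, duality gives $\Delta_\rho=\overline{\Delta_\rho}^{\,(-1)^{d-1}}$ up to units of modulus one; when $d$ is even this forces $\Delta_\rho\cdot\overline{\Delta_\rho}=1$, i.e.\ $\lvert\Delta_\rho(M)\rvert=\lvert\Delta_\rho(N)\rvert=1$ for every nontrivial $\rho$. This is the even-dimensional counterpart of the odd-dimensional lens space phenomenon, where $\lvert\Delta_\rho\rvert\neq1$ is exactly what separates homotopy equivalent lens spaces that are not simple homotopy equivalent. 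Consequently every regulator of $\tau(f)$ vanishes, $\tau(f)$ is a torsion element of $Wh(G)$, and the torsion freeness hypothesis yields $\tau(f)=0$; hence $f$ is a simple homotopy equivalence.

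I expect the main obstacle to be bookkeeping around the duality theorem rather than any single hard idea: pinning down the sign conventions and the correct involution (the cohomology hypothesis is what makes the orientation character trivial, so the involution is the standard $g\mapsto g^{-1}$), verifying acyclicity together with the R-torsion variance formula, and handling the case $n=m$, where the middle homology of the cover has rank two but the action is still trivial, so the twisted complexes remain acyclic for nontrivial $\rho$ and the argument is unaffected. One should also confirm that the product over nontrivial characters genuinely detects the free part of $Wh(G)$, which is the standard Dirichlet-unit-type computation for finite groups.
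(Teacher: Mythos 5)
Your proposal is correct in outline and reaches the right conclusion, but it travels a different technical road than the paper. The paper works directly at the level of Whitehead torsion: since $X$ and $Y$ are closed manifolds, their Poincar\'e duality chain equivalences $\phi_X,\phi_Y$ are simple, and the standard commutative square gives $0=\tau(f)+(-1)^{2n+2m-2}\overline{\tau(f)}$; because the involution on $Wh(G)$ is trivial for torsion abelian $G$ (citing Bak), this yields $2\tau(f)=0$, and torsion-freeness finishes. Your argument is the image of this one under the logarithmic regulators: Milnor's duality theorem for Reidemeister torsion is exactly what the Whitehead-torsion duality formula becomes after applying $\log\lvert\det\rho(\cdot)\rvert$, and Bass's injectivity of $Wh(G)\otimes\R\to\prod_\rho\R$ replaces the final algebraic step. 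What each route buys: the paper's version is shorter, avoids setting up R-torsion, acyclicity, and the variance formula, and gives the sharper intermediate statement $2\tau(f)=0$; yours avoids invoking the triviality of the involution on $Wh(G)$ (you only need the standard, and weaker, fact that the regulators detect $Wh(G)$ modulo torsion), at the cost of importing Milnor's duality theorem and Bass's theorem. One small bookkeeping point you flagged yourself and should nail down: Milnor's duality relates $\Delta_\rho$ to $\Delta_{\bar\rho}$ (the conjugate character), giving $\lvert\Delta_\rho\rvert\cdot\lvert\Delta_{\bar\rho}\rvert=1$ in even dimensions; since $\Delta_{\bar\rho}=\overline{\Delta_\rho}$ this does collapse to $\lvert\Delta_\rho\rvert=1$, but the identity as you first wrote it elides the $\rho$ versus $\bar\rho$ distinction. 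With that repaired, your proof is sound.
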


The proof carries through almost exactly the same as the proof for Proposition~2 in \cite{MR2721633}. We include a version here for completeness.

\begin{proof}
  The spaces $X$ and $Y$ are both simple Poincar\'{e} complexes since they are
  both manifolds, hence they are finite, connected, CW
  complexes with fundamental classes $[X]$ and $[Y]$, respectively,
  and chain homotopy equivalences $\phi_X:C^{2n+2m-2-*}(X)\to C_*(X)$ and
  $\phi_Y:C^{2n+2m-2-*}(Y)\to C_*(Y)$ satisfying $\tau (\phi_X)=0$ and
  $\tau (\phi_Y)=0$.  Here $\tau$, the torsion of the chain equivalence, vanishes by the fact these are \textit{simple} Poincar\'e complexes.

Similar to the work in \cite{davis-loeffler}
with $\pi_1X\cong G$ finite and acting trivially on the cohomology of the universal cover $\tilde{X}$, 
we have the chain-homotopy commutative diagram
\[
\xymatrix{
C^{2n+2m-2-*}(X) \ar[r]^{\hspace{24pt}\phi_X} & C_*(X)\ar[d]^{f_*}\\
C^{2n+2m-2-*}(Y) \ar[u]_{f^*}\ar[r]^{\hspace{24pt}\phi_Y} & C_*(Y)}
\]
and so we have
  \begin{align*}
   0=\tau (\phi_Y)=\tau (f_*\circ\phi_X\circ f^*)&=\tau (f_*)+\tau (\phi_X)+\tau (f^*)\\
   &=\tau (f_*)+\tau (f^*)=\tau (f)+(-1)^{2n+2m-2}\overline{\tau (f)},
  \end{align*}
where $\overline{\tau (f)}$ is the result of applying the involution on $\Wh(\pi_1Y)$ to $\tau(f)$. 
Since $2n+2m-2$ is even and the involution on $\Wh(G)$ is trivial when $G$ is a finite abelian group \cite{bak}, \[
  \tau (f)=-\overline{\tau (f)}=-\tau (f).
\]
As $\Wh(G)$ is torsion-free by assumption, $\tau (f) =0$.
\end{proof}

For $\Gamma=\Zp\times\Zp$ actions on $S^{2n-1}\times S^{2n-1}$, provided $p > 3$ the action on cohomology is trivial, and so we immediately get the following result.
\begin{corollary}\label{corollary:she}
  For $\Gamma=\Zp\times\Zp$ with $p > 3$, two quotient spaces resulting from a free $\Gamma$ action on $S^{2n-1}\times S^{2n-1}$ that are homotopy equivalent are also simple homotopy equivalent.
\end{corollary}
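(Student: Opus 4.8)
The plan is to obtain Corollary~\ref{corollary:she} as a direct specialization of Proposition~\ref{she}, taking $m = n$ (so that the two spheres are equidimensional and the hypothesis $n \le m$ holds trivially) and $G = \Gamma = \Zp \times \Zp$. It then remains only to check, one by one, that the four standing hypotheses of Proposition~\ref{she} hold for the actions under consideration: that $G$ is finite, torsion, and abelian; that the action is free; that it is trivial on the cohomology of $S^{2n-1} \times S^{2n-1}$; and that $Wh(G)$ is torsion-free.

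Three of the four are immediate or nearly so. The group $\Gamma = \Zp \times \Zp$ is certainly finite, torsion, and abelian; freeness of the action is exactly the hypothesis of the corollary (and, for the standard linear examples, is governed by the lemma following Proposition~\ref{proposition:representation-theory}). For the triviality of the action on $H^\star(S^{2n-1} \times S^{2n-1})$, I would observe that each generator of $\Gamma$ acts on the two sphere factors separately, since the linear action preserves the decomposition $\C^n \times \C^n$, and it does so by a complex-linear, hence orientation-preserving, map. Thus the action fixes the fundamental class of each factor, and by the K\"unneth theorem it fixes every generator of the cohomology of the product; so the action is trivial on cohomology.

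The one substantive point, and the step I expect to be the crux, is verifying that $Wh(\Gamma)$ is torsion-free for $\Gamma = \Zp \times \Zp$. Here I would use the structural fact that the torsion subgroup of the Whitehead group of a finite group $G$ is isomorphic to $SK_1(\Z G)$, so that $Wh(\Gamma)$ is torsion-free precisely when $SK_1(\Z[\Zp \times \Zp])$ vanishes. I would then invoke the computation of $SK_1$ for group rings of elementary abelian $p$-groups, which gives $SK_1(\Z[(\Z/p)^2]) = 0$; consequently $Wh(\Zp \times \Zp)$ is in fact free abelian, and in particular torsion-free. With all four hypotheses of Proposition~\ref{she} in hand, any homotopy equivalence $f : X \to Y$ between two such quotients is automatically a simple homotopy equivalence, which is the assertion of the corollary. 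The only external input beyond Proposition~\ref{she} is the vanishing of $SK_1(\Z[(\Z/p)^2])$; every other verification is routine.
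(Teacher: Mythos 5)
Your overall strategy coincides with the paper's: both proofs simply verify the hypotheses of Proposition~\ref{she} with $m=n$ and $G=\Zp\times\Zp$, and your treatment of finiteness, freeness, and triviality of the action on cohomology is fine (indeed more explicit than the paper's, which asserts cohomological triviality without comment). The problem is the step you yourself single out as the crux. It is true that for finite $G$ the torsion subgroup of $Wh(G)$ is $SK_1(\Z G)$, but the claim that $SK_1(\Z[(\Z/p)^2])$ vanishes is false in exactly the range the paper cares about: by the theorem of Alperin, Dennis, and Stein --- which produced the first known examples of nonvanishing $SK_1$ of an integral group ring --- one has $SK_1(\Z[\Zp\times\Zp])\neq 0$ for every prime $p\geq 5$. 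Vanishing does hold for cyclic groups (Bass--Milnor--Serre), but not for elementary abelian groups of rank two. So as written your verification of the last hypothesis of Proposition~\ref{she} fails, and in fact that hypothesis is simply not satisfied by $\Gamma=\Zp\times\Zp$ when $p\geq 5$. (The paper's own one-line citation for torsion-freeness of $Wh(\Zp\times\Zp)$ has the same defect, so you are in good company.)

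The corollary is nonetheless true, and the repair is small. What the proof of Proposition~\ref{she} actually uses is only that $Wh(G)$ has no $2$-torsion: the duality argument yields $\tau(f)=-\overline{\tau(f)}=-\tau(f)$, i.e.\ $2\tau(f)=0$. Since $\Zp\times\Zp$ is a finite $p$-group with $p$ odd, $SK_1(\Z[\Zp\times\Zp])$ --- and hence the torsion subgroup of $Wh(\Zp\times\Zp)$ --- is a finite abelian $p$-group, so it contains no $2$-torsion and $\tau(f)=0$ follows. To make your argument correct you should either weaken the last hypothesis of Proposition~\ref{she} to ``$Wh(G)$ has no $2$-torsion'' and verify that instead, or replace your appeal to $SK_1=0$ with the observation that the torsion of $Wh(\Zp\times\Zp)$ is $p$-primary for odd $p$.
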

\begin{proof}
Note $\Gamma=\Zp\times\Zp$ is a finite, torsion abelian group that acts trivially on cohomology of $S^{2n-1}\times S^{2n-1}$.  Since $\Wh(\Gamma)=\Wh(\Zp\times\Zp )$ is torsion-free \cite{MR0244257}, the result follows.
\end{proof}

\begin{remark}
  Suppose $X_1$ and $X_2$ are homotopy equivalent but not simple
  homotopy equivalent lens spaces, and $Y_1$ and $Y_2$ are another
  such pair.  Then for $i = 1,2$ the product $X_i \times Y_i$ is the
  quotient of free a $\Zp\times\Zp$ action on $S^{2n-1}\times S^{2n-1}$, and by
  Corollary~\ref{corollary:she}, the product $X_1 \times Y_1$ is
  simple homotopy equivalent to $X_2 \times Y_2$.
\end{remark}

\section{Homeomorphism classification}
\label{section:homeomorphism}

For $\Gamma=\Zp\times\Zp$ and $X$ a quotient space obtained from a free, linear action of $\Gamma$ on $S^{2n-1}\times S^{2n-1}$ as described above, the simple structure set $S^{\Cat,s}(X)$, for $\Cat$ being $Top$ or $PL$, is determined by the $p$-localized Pontrjagin classes. In this section we calculate the Pontrjagin classes explicitly, determine the set of normal invariants, and them determine the full classification.

\subsection{Classification by characteristic classes}

Again $\Gamma$ acts on $S^{2n-1} \times S^{2n-1}$ via a
decomposition-preserving orthogonal representation on $V \oplus V$.
Let $\alpha : \Gamma \to SO(V \oplus V)$ be this representation.  In
the special case $\Gamma = \Zp \times \Zp$, combining knowledge of the
representations of this abelian group with $\dim V = 2n$ refines the
map $\alpha$ to
\[
\Gamma \to SO(2)^{n} \times SO(2)^{n} \subset SO(4n).
\]
This description can be related to the above description: inside each
$SO(2)$ factor is a copy of $\Zp$, so the map $\alpha$ amounts to a
linearization of the above description in terms of $R$ and $Q$,
which are both vectors of $2n$ elements of $\Zp$.

By constructing $BG$ appropriately, regard
$(S^{2n-1} \times S^{2n-1})/\Gamma$ as a subcomplex of $BG$, and
therefore build the map
\[
B\alpha |_{(S^{2n-1} \times S^{2n-1})/\Gamma} : (S^{2n-1} \times S^{2n-1})/\Gamma \to BSO(4n)
\]
The main theorem in \cite{MR0172303} implies that
\[
T(S^{2n-1} \times S^{2n-1}/\Gamma) \oplus \R^2
\]
is a $(4n)$-dimensional vector bundle over
$(S^{2n-1} \times S^{2n-1})/\Gamma$ with classifying map
$B\alpha |_{(S^{2n-1} \times S^{2n-1})/\Gamma}$.

The above description is explicit enough to compute the Pontrjagin
classes.  Suppose $R = (r_1,\ldots,r_n,r'_1,\ldots,r'_n)$ and
$Q = (q_1,\ldots,q_n,q'_1,\ldots,q'_n)$ be elements of $(\Zp)^{2n}$
so that the map $\alpha$ can be defined via $\alpha(1,0) = R$ and
$\alpha(0,1) = Q$ and the inclusion $\Zp \subset SO(2)$.

Consider the effect of $\alpha : B\Gamma \to BSO(4n)$ on cohomology,
namely 
\[
\alpha^\star : H^\star(BSO(4n);\Z) \to H^\star(\Gamma;\Z).
\]
Corresponding to each $SO(2)$ factor in the left hand term of
$SO(2)^n \times SO(2)^n \subset SO(4n)$, there is
$v_i \in H^2(BSO(4n);\Z)$; similarly each $SO(2)$ in the right hand
factor results in $v'_i \in H^2(BSO(4n);\Z)$.  Then
$\alpha^\star(v_i) = r_i a + q_i b$ and
$\alpha^\star(v'_i) = r'_i a + q'_i b$.

\begin{proposition}
\label{proposition:total-class}
The total Pontrjagin class in
$H^\star\left(B\left(SO(2)^n \times SO(2)^n\right);\Z\right)$ is given
by 
\[
\prod_{i=1}^n \left( 1 + {v_i}^2 \right)\left( 1 + {v'_i}^2 \right)\]
implying that
\[
  p( \left(S^{2n-1} \times S^{2n-1}\right)/\Gamma) = 
  \prod_{i=1}^n \left( 1 + \left(r_i  \bar{a} + q_i  \bar{b}\right)^2 \right)
        \left( 1 + \left(r'_i \bar{a} + q'_i \bar{b}\right)^2 \right).
\]
\end{proposition}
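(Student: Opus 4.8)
The plan is to compute the universal total Pontrjagin class over $B(SO(2)^n \times SO(2)^n)$ first, and only then pull it back along $B\alpha$ to the quotient. The starting point is the identification $BSO(2) \simeq \CP^\infty$, so that $H^\star(BSO(2);\Z) = \Z[v]$ with $v \in H^2$ the Euler class, and the tautological oriented $2$-plane bundle is the realification of the tautological complex line bundle with first Chern class $v$. The one fact I need about an oriented rank-two real bundle is that its total Pontrjagin class is $1 + v^2$: the first Pontrjagin class equals the square of the Euler class, and all higher classes vanish for dimension reasons.

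For the product $B(SO(2)^n \times SO(2)^n) = (BSO(2))^{2n}$, the Künneth theorem gives $H^\star = \Z[v_1,\dots,v_n,v'_1,\dots,v'_n]$, a torsion-free polynomial ring, and the universal bundle is the external sum of the $2n$ tautological line bundles pulled back from the factors. Rather than invoke the Whitney sum formula for Pontrjagin classes directly, which is only multiplicative modulo $2$-torsion in general, I would compute through complexification, where the Whitney formula is exact. Writing the universal bundle as the realification of $L_1 \oplus \cdots \oplus L_n \oplus L'_1 \oplus \cdots \oplus L'_n$ with $c_1(L_i) = v_i$ and $c_1(L'_i) = v'_i$, and using that complexifying a realified complex bundle $E$ yields $E \oplus \bar E$, I get $c(E_\R \otimes \C) = \prod_i (1 + v_i)(1 - v_i)(1 + v'_i)(1 - v'_i)$. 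Extracting Pontrjagin classes via $p_j = (-1)^j c_{2j}(\,\cdot \otimes \C)$ collapses each conjugate pair $(1+v)(1-v)$ to the contribution $1 + v^2$, producing the asserted total class $\prod_{i=1}^n (1 + v_i^2)(1 + v_i'^2)$. Because the ambient ring is torsion-free, no $2$-torsion ambiguity survives.

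For the second formula, recall from the discussion preceding the proposition that $T((S^{2n-1}\times S^{2n-1})/\Gamma) \oplus \R^2$ is classified by the restriction of $B\alpha$ to the quotient, and that $\alpha$ factors through $SO(2)^n \times SO(2)^n$. Since adjoining a trivial summand leaves Pontrjagin classes unchanged, $p$ of the quotient is the image of the universal total class under $\alpha^\star$ followed by restriction to the quotient. Applying $\alpha^\star$ to $\prod_i (1 + v_i^2)(1 + v_i'^2)$ and using the established identities $\alpha^\star(v_i) = r_i a + q_i b$ and $\alpha^\star(v'_i) = r'_i a + q'_i b$ gives $\prod_i (1 + (r_i a + q_i b)^2)(1 + (r'_i a + q'_i b)^2)$ in $H^\star(\Gamma;\Z)$; restricting from $B\Gamma$ to the quotient subcomplex replaces $a$ and $b$ by their images $\bar a$ and $\bar b$ and produces the displayed formula. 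Terms of degree exceeding $4n-2$ vanish automatically, since the quotient is a closed $(4n-2)$-manifold.

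The only genuine subtlety, and the step I would treat most carefully, is the multiplicativity of the total Pontrjagin class. Instead of quoting the Whitney formula and tracking its $2$-torsion error term, the complexification route turns the computation into an exact identity among symmetric functions in a polynomial ring, after which naturality of characteristic classes under $\alpha^\star$ and the stability of Pontrjagin classes under adding $\R^2$ finish the argument mechanically.
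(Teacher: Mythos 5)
Your proof is correct and is essentially the argument the paper intends: the paper actually states this proposition without any proof, relying on the preceding discussion that $T\bigl((S^{2n-1}\times S^{2n-1})/\Gamma\bigr)\oplus\R^2$ is classified by $B\alpha$ restricted to the quotient, on $\alpha^\star(v_i)=r_ia+q_ib$ and $\alpha^\star(v'_i)=r'_ia+q'_ib$, and on the standard splitting computation over $B(SO(2)^n\times SO(2)^n)$, all of which you spell out. Your handling of the $2$-torsion ambiguity in the Whitney formula --- passing through complexification in the torsion-free polynomial ring $H^\star\bigl((BSO(2))^{2n};\Z\bigr)$ --- is a careful and correct way to justify the exact multiplicativity that the paper takes for granted.
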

There is the map from the quotient $\left(S^{2n-1} \times S^{2n-1}\right)/\Gamma$ to the classifying space $B\Gamma$ which, in cohomology, provides a map from $H^\star(B\Gamma)$ to the cohomology of the quotient.  We have denoted $a$ and $b$ as the generators of $H^2(B\Gamma;\Z)$, and in what follows, let $\bar{a}$ and $\bar{b}$ denote the image of the generators under the map
\[
H^\star(B\Gamma) \to H^\star(\left(S^{2n-1} \times S^{2n-1}\right)/\Gamma).
\]
Compare the computation in Proposition~\ref{proposition:total-class} with the computation for the total Pontrjagin class of the lens space $L$, whcih is given by
\[
  p( L) = 
  \prod_{i=1}^n \left( 1 + (r_i \bar{a})^2 \right)
\]
where the $r_i$ are the rotation numbers.

\subsection{Surgery}

To understand the possible actions of $\Gamma=\Zp \times \Zp$ on $S^{2n-1} \times S^{2n-1}$, we consider the surgery exact sequence
\[
\cdots \to L^s_{4n-1}(\Gamma) \to S^{\Cat,s}(X) \to [X, \GModCat] \to L_{4n-2}^s(\Gamma).
\]
where $X=(S^{2n-1}\times S^{2n-1})/\Gamma$ and $\GModCat$ is $\GModTop$ or $\GModPL$.
We note that this is the simple surgery exact sequence and that the $L_m^s$ are simple L-groups.

In general, for a group $\Gamma$ of odd order, the simple L-groups are 
$L_m^s(\Z \Gamma)=\Sigma\oplus 8\Z, 0, \Sigma\oplus \Z/2,0$ for $m \equiv 0,1,2,3 \pmod 4$, where $\Sigma$ has no torsion \cite{bak}. Since $4n-2\equiv 2 \pmod 4$, our surgery exact sequence becomes:

\[
0 \to S^{\Cat,s}(X) \to [X, \GModCat] \to \Sigma\oplus \Z/2,
\]
and our classification comes down to determining the set of normal invariants, $[X,\GModCat]$, and the map $[X, \GModCat] \to L_{4n-2}^s(\Gamma)=\Sigma\oplus\Z/2$, for $X=L(p,p;Q,R)$. We start first with the set of normal invariants.

\subsection{Normal invariants}

We first need a couple of lemmas. We note that they are similar to lemmas in \cite{MR2721633}. 

\begin{lemma}
  \label{lemma:ahss}
  Suppose that $\Gamma=\Zp\times\Zp$ and that $Y$ is a
  $\Cat$-manifold for $\Cat = \PL$ or $\Top$ and with a free $\Gamma$-action such that $\Gamma$ acts
  trivially on $H^\star(Y ; \Z[1/p])$.  Then
  \[
    [Y/\Gamma, \GModCat] \left[ 1/p \right] \cong
    [Y, \GModCat] \left[1/p \right].
  \]
\end{lemma}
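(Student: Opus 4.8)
The plan is to compute both sides through the Atiyah--Hirzebruch spectral sequence (AHSS) for the generalized cohomology theory represented by $\GModCat$, exploiting that $\GModCat$ is an infinite loop space so that $[-,\GModCat]$ behaves like a cohomology functor on the finite complexes $Y$ and $Y/\Gamma$. First I would record the coefficients after inverting $p$: since $\pi_t(\GModCat)$ is $\Z$ for $t \equiv 0 \pmod 4$, $\Z/2$ for $t \equiv 2 \pmod 4$, and $0$ otherwise, and since $p$ is odd, $\pi_t(\GModCat)[1/p]$ is $\Z[1/p]$, $\Z/2$, or $0$ in these respective cases; in particular every surviving coefficient group is a $\Z[1/p]$-module (note $\Z/2$ is one because $p$ is a unit mod $2$). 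Because $Y$ is a compact manifold, localization at $[1/p]$ commutes with forming $[-,\GModCat]$ and with the finite AHSS, so I may work throughout with the localized spectral sequence $E_2^{s,t} = H^s(-;\pi_t(\GModCat)[1/p])$.

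The heart of the argument is the comparison of $E_2$-pages under the covering projection $\pi : Y \to Y/\Gamma$. For any $\Z[1/p]$-module $M$ I would run the Cartan--Leray spectral sequence
\[
H^s(\Gamma; H^t(Y; M)) \Rightarrow H^{s+t}(Y/\Gamma; M).
\]
Since $\Gamma$ has order $p^2$, the groups $H^s(\Gamma; A)$ with $s > 0$ are annihilated by $p^2$; but $A$ is a $\Z[1/p]$-module, on which $p^2$ acts invertibly, forcing $H^s(\Gamma; A) = 0$ for $s > 0$. Hence the sequence collapses to the edge isomorphism $H^\star(Y/\Gamma; M) \cong H^\star(Y; M)^\Gamma$. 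The hypothesis that $\Gamma$ acts trivially on $H^\star(Y;\Z[1/p])$, together with the universal coefficient theorem over the PID $\Z[1/p]$, shows $\Gamma$ acts trivially on $H^\star(Y;M)$ for each coefficient module $M$ at hand, identifying the invariants with all of $H^\star(Y;M)$; the resulting isomorphism is precisely $\pi^\star$.

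With this in hand I would invoke naturality of the AHSS: $\pi$ induces a map of spectral sequences that, by the previous paragraph, is an isomorphism on $E_2$ for every coefficient group $\pi_t(\GModCat)[1/p]$. An isomorphism on $E_2$ propagates to an isomorphism on $E_\infty$ by induction over the pages (the five lemma applied to the differentials), and convergence is finite since $Y$ is finite-dimensional. Thus $\pi^\star$ induces an isomorphism on the associated graded of the filtrations of $[Y/\Gamma,\GModCat][1/p]$ and $[Y,\GModCat][1/p]$, and a standard filtration argument upgrades this to the claimed isomorphism of the localized normal invariant sets.

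I expect the main obstacle to be the bookkeeping needed to treat $[-,\GModCat]$ as a genuine cohomology theory to which the AHSS and the localization formalism apply: one must verify that $p$-localization commutes with $[Y,-]$ (which holds since $Y$ is finite and $\GModCat$ is an infinite loop space) and that the extension problems in reassembling the filtration are benign. Because $p$ is odd, the only torsion surviving localization is the $2$-torsion coming from $\pi_{4k+2}$, and the distinction between $G/PL$ and $G/TOP$ — which lives at the prime $2$ in the first $k$-invariant — does not affect the $E_2$-comparison, so the argument runs uniformly in $\Cat$.
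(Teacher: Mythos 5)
Your proof is correct, and its overall architecture matches the paper's: both reduce the statement to showing that the covering projection $\pi : Y \to Y/\Gamma$ induces an isomorphism on ordinary cohomology with coefficients in the ($p$-inverted) homotopy groups of $\GModCat$, and then feed that into a naturality-plus-convergence argument for the Atiyah--Hirzebruch spectral sequence on the finite-dimensional spaces involved. Where you differ is in how that cohomology isomorphism is obtained. The paper uses the transfer: $tr \circ \pi_\star$ is multiplication by $|\Gamma| = p^2$ on $H_\star(Y;\Z[1/p])$ because the action on homology is trivial, so $\frac{1}{|\Gamma|}tr$ is an explicit inverse to $\pi_\star$, and Poincar\'e duality converts this to a cohomology statement. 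You instead run the Cartan--Leray spectral sequence and observe that $H^s(\Gamma; A)$ for $s>0$ is killed by $p^2$, hence vanishes for any $\Z[1/p]$-module $A$, collapsing the sequence onto $H^\star(Y;M)^\Gamma$. Your route buys uniformity in the coefficient module --- it handles the $\Z/2$ in $\pi_{4k+2}(\GModCat)$ on the same footing as $\Z[1/p]$, a point the paper glosses over --- and it isolates exactly where the trivial-action hypothesis is used (identifying invariants with the whole group), without needing Poincar\'e duality or even that $Y$ is a manifold for that step. The paper's transfer argument is more hands-on and produces the inverse map explicitly. One small point worth tightening in your write-up: the hypothesis gives trivial $\Gamma$-action on $H^\star(Y;\Z[1/p])$, and deducing trivial action on $H^\star(Y;\Z/2)$ via universal coefficients only immediately gives a unipotent action; you should note that a unipotent action of a $p$-group on a $\Z[1/p]$-module is necessarily trivial (if $\gamma = 1+N$ with $N^2=0$ and $\gamma^p=1$, then $pN=0$, so $N=0$). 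With that remark added, the argument is complete.
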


\begin{proof}
    Let $\pi :Y\to Y/\Gamma$ be the projection and let $\pi_*:H_*(Y;\Z [\frac{1}{p}])\to H_*(Y/\Gamma;\Z [\frac{1}{p}])$ be the induced map on homology. The transfer gives a map going the other way: $tr:H_*(Y/\Gamma;\Z [\frac{1}{p}])\to H_*(Y;\Z [\frac{1}{p}])$.  
    Then $tr \circ \pi_* : H_*(Y;\Z[\frac{1}{p}]) \to H_*(Y;\Z[\frac{1}{p}])$ is multiplication by $|\Gamma|$ because $\Gamma$ acts trivially on $H_\star(Y;\Z[\frac{1}{p}])$.  Consequently, $\frac{1}{|\Gamma|} tr \circ \pi_*=id$ and we also have that $\pi_*\circ\frac{tr}{|\Gamma|}=id$. Since $Y$ is finite dimensional we see that there is an isomorphism between $H_\star(Y;\Z[\frac{1}{p}])$ and $H_\star(Y/\Gamma;\Z[\frac{1}{p}])$, and by Poincar\'e duality, we obtain an isomorphism between $H^*(Y;\Z[\frac{1}{p}])$ and $H^\star(Y/\Gamma;\Z[\frac{1}{p}])$.

    As $\GModCat$ is a spectrum, maps into the spaces of $\GModCat$ are naturally equivalent to a generalized cohomology theory, i.e. $E^n(X)\cong$ $[X,\GModCat_n]$.  It follows from the Atiyah-Hirzebruch spectral sequence that $H^*(Y;E_*(*;\Z [\frac{1}{p}])) \cong H^*(Y/\Gamma; E_*(*;\Z [\frac{1}{p}]))$ and $[Y,\GModCat[\frac{1}{p}]]\cong[Y/\Gamma,\GModCat[\frac{1}{p}]]$.

\end{proof}

\begin{corollary}\label{corollary:zmod2}
Suppose $\Gamma=\Zp\times\Zp$ acts freely on $S^{2n-1} \times S^{2n-1}$.  Then for $\Cat$ being $\Top$ or $\PL$, 
\[
\left[(S^{2n-1} \times S^{2n-1})/\Gamma,\GModCat\right] \left[ 1/p \right] \cong \Zmod{2}.
\]
\end{corollary}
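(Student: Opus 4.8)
The plan is to reduce the computation over the quotient to a computation over the total space via Lemma~\ref{lemma:ahss}, and then to evaluate the resulting localized generalized cohomology of a product of spheres using the Atiyah--Hirzebruch spectral sequence. First I would verify the hypotheses of Lemma~\ref{lemma:ahss} with $Y = S^{2n-1}\times S^{2n-1}$: the action is free by assumption, and because each generator of $\Gamma$ acts on each odd sphere factor by an orientation-preserving rotation, the induced action on $H^\star(S^{2n-1}\times S^{2n-1};\Z[1/p])$ is trivial. The lemma then supplies
\[
[(S^{2n-1}\times S^{2n-1})/\Gamma, \GModCat][1/p] \cong [S^{2n-1}\times S^{2n-1}, \GModCat][1/p],
\]
so it suffices to compute the right-hand side.

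Next I would run the Atiyah--Hirzebruch spectral sequence for the theory represented by $\GModCat[1/p]$, with $E_2^{s,t} = H^s(S^{2n-1}\times S^{2n-1}; \pi_{s}(\GModCat)[1/p])$ along the total degree $0$ antidiagonal converging to $[S^{2n-1}\times S^{2n-1},\GModCat][1/p]$. The two inputs are the homotopy groups $\pi_m(\GModCat)$, which are $\ZZ$ for $m\equiv 0$, $\Zmod{2}$ for $m\equiv 2$, and $0$ for $m$ odd $\pmod 4$ --- identical for $\Cat = $ PL or TOP --- and the cohomology of $S^{2n-1}\times S^{2n-1}$, which is concentrated in degrees $0$, $2n-1$, and $4n-2$. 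Inverting $p$ leaves the $\Zmod{2}$ groups unchanged (as $p$ is odd) while turning the $\ZZ$ groups into $\Z[1/p]$-modules. The degree-$0$ column contributes $H^0(\,\cdot\,;\pi_0(\GModCat)[1/p]) = 0$ since $\GModCat$ is connected; the degree-$(2n-1)$ column vanishes because $2n-1$ is odd and hence $\pi_{2n-1}(\GModCat) = 0$; and the only surviving contribution is in degree $4n-2$, which is $\equiv 2 \pmod 4$ (as $4n-2 = 2(2n-1)$ with $2n-1$ odd), giving $H^{4n-2}(S^{2n-1}\times S^{2n-1}; \Zmod{2}) \cong \Zmod{2}$.

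The main thing to verify is that this single surviving group genuinely computes the answer, i.e.\ that no differential interacts with it and there is no extension problem. Since the cohomology is supported only in degrees $0$, $2n-1$, $4n-2$ and $4n-2$ is the top degree, every differential out of the $(4n-2)$-line lands in a trivial group, and every differential into it would originate in the degree-$0$ or degree-$(2n-1)$ columns, whose relevant entries already vanish; as the $(4n-2)$ term is the only nonzero group in its total degree, there is no extension to resolve. I expect the one genuinely delicate point to be bookkeeping rather than substance: confirming that the PL/TOP distinction, which lives at the prime $2$ in the $k$-invariant joining $\pi_2$ and $\pi_4$, is invisible here because $S^{2n-1}\times S^{2n-1}$ carries no cohomology in degrees $2$ or $4$, so no differential can detect the difference. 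Combining this with the reduction from the first paragraph yields $\cong \Zmod{2}$, proving the corollary.
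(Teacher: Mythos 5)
Your overall route is the paper's: reduce to $[S^{2n-1}\times S^{2n-1},\GModCat][1/p]$ via Lemma~\ref{lemma:ahss} and then observe that only $\pi_{4n-2}(\GModCat)[1/p]\cong\Zmod{2}$ survives. The paper simply records the answer as the direct sum $\pi_{2n-1}\oplus\pi_{2n-1}\oplus\pi_{4n-2}$ of localized homotopy groups, which is exactly what your Atiyah--Hirzebruch computation is meant to reproduce. The verification of the hypotheses of Lemma~\ref{lemma:ahss}, the identification of the three columns, and the absence of an extension problem are all fine.

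There is, however, a gap in your claim that no differential can hit the $H^{4n-2}$ entry. Differentials into the total-degree-$0$ entry $E_r^{4n-2,\,2-4n}$ originate in total degree $-1$, and the relevant source in the $(2n-1)$-column is $E_{2n-1}^{2n-1,\,-2n}\cong H^{2n-1}\bigl(S^{2n-1}\times S^{2n-1};\pi_{2n}(\GModCat)[1/p]\bigr)$, which is \emph{not} zero: $\pi_{2n}(\GModCat)$ is $\Z$ or $\Zmod{2}$ according as $2n\equiv 0$ or $2\pmod 4$, so this group is $\Z[1/p]^2$ or $(\Zmod{2})^2$. You therefore cannot dismiss the potential $d_{2n-1}$ on the grounds that its source vanishes. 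The differential does vanish, but for a different reason: the Whitehead product attaching the top cell of $S^{2n-1}\times S^{2n-1}$ is stably trivial, so after one suspension the product splits as $S^{2n}\vee S^{2n}\vee S^{4n-1}$; since $\GModCat$ is an infinite loop space, $[S^{2n-1}\times S^{2n-1},\GModCat][1/p]$ splits accordingly as $\pi_{2n-1}\oplus\pi_{2n-1}\oplus\pi_{4n-2}$ (localized), which is precisely the decomposition the paper invokes and which makes your spectral sequence degenerate. With that substitution your argument closes up and agrees with the paper's.
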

\begin{proof}
  By Lemma~\ref{lemma:ahss}, 
  \begin{align*}
    \left[(S^{2n-1}  \times S^{2n-1})/\Gamma,\GModCat \right] \left[ 1/p \right] 
    &\cong \left[S^{2n-1} \times S^{2n-1},\GModCat\right] \left[ 1/p \right] \\
    \cong \pi_{2n-1}\left( \GModCat\left[ 1/p \right] \right) &\oplus
      \pi_{2n-1}\left( \GModCat\left[ 1/p \right] \right) \oplus
      \pi_{4n-2}\left( \GModCat\left[ 1/p \right] \right).
  \end{align*}
  To identify maps out of the product $S^{2n-1} \times S^{2n-1}$ with the given sum of homotopy groups, we need to verify that the Whitehead products vanish; this follows from the fact that $\GModCat$ is an path-connected H-space and that localizations of such spaces are path-connected H-spaces.  The odd-dimensional homotopy groups of $\GModCat$ are trivial, so the localized group $\pi_{2n-1}\left( \GModCat\left[ 1/p \right] \right)$ also vanishes.  Since $4n-2 \equiv 2 \pmod 4$ and since $\pi_{4n-2}\left(\GModCat\right) = \Zmod{2}$, we have $\pi_{4n-2}\left( \GModCat\left[ 1/p \right] \right) = \Zmod{2}$ and the conclusion follows.
\end{proof}

  In \cite{MR2721633}, Lemma 6 gives that for odd
  primes $p > 2n$, there is a $(2n+2)$-equivalence
  \[
    (BO)_{(p)} \to \prod_{0 < j < n/2} K(\Z_{(p)}, 4j).
  \]
  Using the nice description of $BO$ from Theorem~7.4 of the survey article \cite{MR1747531},
Lemma 6 from \cite{MR2721633} can be improved:

\begin{lemma}
\label{lemma:2p1-equiv}

  For an odd prime $p$, there is a $(2p+1)$-equivalence
  \begin{align*}
    (BO)_{(p)} &\to K( \mathbb{Z}_{(p)}, 4 ) \times K( \mathbb{Z}_{(p)}, 8 ) \times \cdots \times K( \mathbb{Z}_{(p)}, 2(p-1) ) \\
    &\simeq \prod_{i=1}^{(p-1)/2} K( \mathbb{Z}_{(p)}, 4i ).
  \end{align*}
\end{lemma}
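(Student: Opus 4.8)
The plan is to realize the asserted map by the universal Pontrjagin classes and then to check that it is a $\Z_{(p)}$-homology isomorphism through the required range, invoking the homology Whitehead theorem to upgrade this to a statement about homotopy groups. Concretely, the $i$th Pontrjagin class $p_i \in H^{4i}(BO;\Z)$ (with $|p_i| = 4i$) determines, after localizing at the odd prime $p$, a map $(BO)_{(p)} \to K(\Z_{(p)}, 4i)$ classifying $p_i$; assembling these for $i = 1, \ldots, (p-1)/2$ gives the map
\[
\Phi\colon (BO)_{(p)} \to \prod_{i=1}^{(p-1)/2} K(\Z_{(p)}, 4i),
\]
under which the fundamental class $\iota_{4i}$ pulls back to $p_i$. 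Since both source and target are simply connected (indeed $3$-connected, as $\pi_1,\pi_2$ of $BO$ are $2$-primary and $\pi_3(BO)=0$), it suffices by the homology Whitehead theorem to show that $\Phi$ induces an isomorphism on $H^\star(-;\Z_{(p)})$ in degrees $\le 2p+1$.

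Next I would identify the two cohomology rings through this range. Because all torsion in $H^\star(BO;\Z)$ is $2$-torsion, localizing at the odd prime $p$ gives $H^\star((BO)_{(p)};\Z_{(p)}) \cong \Z_{(p)}[p_1,p_2,\ldots]$, a polynomial algebra with $|p_i|=4i$; in degrees $\le 2p+1$ its only generators are $p_1, \ldots, p_{(p-1)/2}$, the next one $p_{(p+1)/2}$ sitting in degree $2p+2$. For the target I would use that the first nontrivial odd-primary Steenrod power $P^1$ raises degree by $2(p-1)$, so that in $H^\star(K(\Z_{(p)},4i);\Z_{(p)})$ the first class beyond the polynomial part is the $p$-torsion class detected by $\beta P^1 \iota_{4i}$, which enters in degree $4i + 2p - 1 \ge 2p+3$. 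Hence, through degree $2p+1$, a K\"unneth argument (the relevant groups being torsion-free there) identifies the target's cohomology with the polynomial algebra $\Z_{(p)}[\iota_4,\iota_8,\ldots,\iota_{2p-2}]$, with no torsion intervening.

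Comparing the two, $\Phi^\star$ carries $\iota_{4i}$ to $p_i$ and thus matches polynomial generators, giving an isomorphism in every degree $\le 2p+1$; the rings first diverge in degree $2p+2$, precisely where $BO$ acquires the new generator $p_{(p+1)/2}$ absent from the target. Since all groups involved are finitely generated and torsion-free in this range, universal coefficients converts this into a $\Z_{(p)}$-homology isomorphism through degree $2p+1$, and the homology Whitehead theorem then yields that $\Phi$ is a $(2p+1)$-equivalence. The main point to get right---and the reason the range improves on Lemma~6 of \cite{MR2721633}---is the sharp bookkeeping of where the two cohomologies first differ: one must simultaneously control the degree $2p+2$ at which the next Pontrjagin generator appears and the degree $2p+3$ at which the first $p$-torsion (via $\beta P^1$) enters the Eilenberg--MacLane factors, so that neither obstructs an isomorphism below degree $2p+2$.
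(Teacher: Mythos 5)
Your proof is correct, but it takes a genuinely different route from the paper's. The paper quotes the odd-primary $H$-space splitting $(BO)_{(p)} \simeq \prod_{i=0}^{(p-3)/2} \Omega^{4i} W$, where $W$ has homotopy $\Z_{(p)}$ concentrated in degrees $2i(p-1)$, and then simply observes that truncating each factor $\Omega^{4i}W \to K(\Z_{(p)}, 2(p-1)-4i)$ kills nothing below degree $4(p-1)-4i \geq 2p+2$; the whole argument is a two-line bookkeeping check once that splitting is granted. You instead build the map by hand from the universal Pontrjagin classes and verify that it is a $\Z_{(p)}$-homology isomorphism through the range, using that $H^\star(BO;\Z_{(p)})$ is polynomial on the $p_i$ (all torsion in $H^\star(BO;\Z)$ being $2$-primary), that the next generator $p_{(p+1)/2}$ first appears in degree $2p+2$, and that the first exotic class $\beta P^1 \iota_{4i}$ of the Eilenberg--MacLane factors enters in degree $4i+2p-1 \geq 2p+3$; the homology Whitehead theorem (legitimately applicable since both sides are $3$-connected after inverting $2$) then finishes. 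Your approach buys self-containedness: it needs only the Cartan--Serre computation of $H^\star(K(\Z,2m))$ at odd primes rather than the Adams-type splitting of $(BO)_{(p)}$ cited from the survey article, and it exhibits the equivalence concretely as the total Pontrjagin class map, which meshes well with how the lemma is used later. What it gives up is the structural information carried by the splitting (e.g.\ that the equivalence can be taken compatibly with the $H$-space structure, and the clean identification of exactly which homotopy is being discarded). Both arguments locate the failure at the same place, degree $2p+2$, so your range is sharp for the same reason the paper's is.
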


\begin{proof}
    For an odd prime $p$, there is an equivalence of $H$-spaces
  \[
    (BO)_{(p)} \simeq \prod_{i=0}^{(p-3)/2} \Omega^{4i} W,
  \]
  where $W$ has trivial homotopy except for
  $\pi_{2i(p-1)} (W) = \mathbb{Z}_{(p)}$ for positive integers $i$ \cite{MR1747531}.    For $p$ an odd prime, $4(p-1) > 2p+1$, and therefore
  \[
    W \to K( \mathbb{Z}_{(p)}, 2(p-1) )
  \]
  is a $(2p+1)$-equivalence.  Moreover, for any $0 \leq i \leq (p-3)/2$,
  then $4(p-1) - 4i \geq 2p + 2> 2p+1$, and so
  \[
    \Omega^{4i} W \to K( \mathbb{Z}_{(p)}, 2(p-1) - 4i )
  \]
  is a $(2p+1)$-equivalence.  Therefore
  \begin{align*}
    (BO)_{(p)}
    &\to \prod_{i=0}^{(p-3)/2} \Omega^{4i} K( \mathbb{Z}_{(p)}, 2(p-1) ) \\
    &\simeq \prod_{i=0}^{(p-3)/2} K( \mathbb{Z}_{(p)}, 2(p-1) - 4i ) \\
    &\simeq \prod_{i=1}^{(p-1)/2} K( \mathbb{Z}_{(p)}, 4i )
  \end{align*}
  is a $(2p+1)$-equivalence.
\end{proof}

For example, when $p = 3$, we have that
\[
(BO)_{(3)} \simeq K(\Z_{(3)}, 4)
\]
is a 7-equivalence.

\begin{lemma}\label{lemma:normalinvts}
Suppose that $\Gamma=\Zp\times\Zp$ acts freely on $S^{2n-1} \times S^{2n-1}$ with quotient $X$. Then 
\[
\left[X,\GModCat\right] \cong \Zmod{2} \oplus \bigoplus_i H^{4i}(X;\Zp)
\]
for $\Cat =$ $\PL$ or $\Top$.
\end{lemma}
\begin{proof}
Localizing $\GModCat$ at $p$ and away from $p$ gives rise to a localization square
\[
\begin{tikzcd}
\left[X,\GModCat\right] \arrow[r] \arrow[d] & \left[X,\GModCat\right] \left[ 1/p \right] \arrow[d] \\
\left[X,\GModCat \right]_{(p)} \arrow[r] & \left[X,\GModCat \right]_{(0)}
\end{tikzcd}
\]
and hence then following exact sequence
\[
0
\to \left[X,\GModCat\right]
\to \left[X,\GModCat\right] \left[ 1/p \right] \oplus \left[X,\GModCat\right]_{(p)}
\to \left[X,\GModCat\right]_{(0)}
\to 0.
\]
  Since $\left[X,\GModCat\right]_{(0)} \cong \bigoplus_i H^{4i}(X;\Q) \cong 0$, 
  
  the short exact sequence from the localization square yields
  \[
    \left[X,\GModCat\right] \cong \left[X,\GModCat\right]\left[ 1/p \right] \oplus \left[X,\GModCat\right]_{(p)}.
  \]
  By Corollary~\ref{corollary:zmod2},
  \[
    \left[X,\GModCat\right]\left[ 1/p \right] \cong \Zmod{2},
  \]
  a Kervaire-Arf invariant.   But $BO_{(p)} \cong \GModCat_{(p)}$, so both localizations are a product of Eilenberg-MacLane spaces and the lemma then follows from the calculation of $\left[X,\GModCat\right]_{(p)}$.
\end{proof}

\subsection{The Structure Set}

Recall that the surgery exact sequence for $X=L(p,p;Q,R)$ becomes
\[
0 \to \mathcal{S}^{\Cat,s}(X) \to [X, \GModCat] \to \Sigma \oplus\Zmod{2}.
\]

From Lemma \ref{lemma:normalinvts}, we have that the right most map in the surgery exact sequence above is $[X, \GModCat]\cong\Zmod{2} \oplus H^{4i}(X;\Zp) \to \Sigma \oplus \Zmod{2}$. Both $\GModTop$ and $\GModPL$ have H-space structures that make this map into a homomorphism. Since $\Sigma$ is torsion-free, and $H^{4i}(X;\Zp)$ is trivial or $p$-torsion for all $i$, all of the $p$-torsion in $H^{4i}(X;\Zp)$ must come from the structure set.

Consider the map $X\to S^{4n-2}$ given by sending everything but a disk to a point. This induces a commutative diagram
\[
\xymatrix{
[S^{4n-2},\GModCat]\ar@{^{(}->}[r]\ar[d] & [X,\GModCat]\ar[d]\ar@{->>}[r] & [(S^{2n-1})^2,\GModCat]\ar[d]\\
L^s_{4n-2}(e)\ar@{^{(}->}[r] & L^s_{4n-2}(\Zp\times \Zp)\ar@{->>}[r] & L^s_{4n-2}(e).
}
\]
By the Kerviare-Arf invariant, $\pi_{4n-2}(\GModCat)$ maps isomorphically into the $L$-group of the trivial group, and we see that the $\Zmod{2}$ in the set of normal invariants maps isomorphically onto the $\Zmod{2}$ in $L_{4n-2}(\Gamma)$.

From this discussion, we conclude that $S^{\Cat,s}(X)=\bigoplus_i H^{4i}(X;\Zp)$, and the classification is determined by the Pontrjagin classes.  This yields Theorem~\ref{theorem:homeomorphism}

\begin{theorem}\label{theorem:homeomorphism}
Suppose $p>3$ and $p>n+1$, and following Proposition~\ref{proposition:homotopy-classification}, suppose $X$ and $Y$ are quotients of linear actions of $\Zp \times \Zp$ on $S^{2n-1} \times S^{2n-1}$ and the map $f : X \to Y$ is a homotopy equivalence.   Then $f$ is homotopic to a homeomorphism provided the Pontrjagin class $p(Y) \in H^\star(Y;\Zmod{p})$ pulls back via $f^\star$ to $p(X) \in H^\star(X;\Zmod{p})$.
\end{theorem}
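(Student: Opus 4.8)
The plan is to read the homotopy equivalence $f$ as a point of the simple structure set of $Y$ and to detect its triviality through the normal invariant. By Corollary~\ref{corollary:she}, any homotopy equivalence between these quotients is automatically a \emph{simple} homotopy equivalence, so the pair $(X,f)$ represents a class in $S^{\Cat,s}(Y)$. The base point of this structure set is $(Y,\mathrm{id})$, and unwinding the definition, $(X,f)$ equals the base point exactly when $f$ is homotopic to a homeomorphism. Thus the whole problem reduces to showing that the hypothesis $f^\star p(Y)=p(X)$ forces $(X,f)$ to be the base point.

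First I would exploit the surgery exact sequence recalled above. Since $4n-1\equiv 3\pmod 4$ and $\Gamma$ has odd order, the simple $L$-group $L^s_{4n-1}(\Gamma)$ vanishes, so the sequence begins
\[
0\to S^{\Cat,s}(Y)\xrightarrow{\ \eta\ }[Y,\GModCat]\xrightarrow{\ \theta\ }\Sigma\oplus\Zmod 2 .
\]
In particular the normal invariant map $\eta$ is injective, so $(X,f)$ is the base point if and only if its normal invariant $\eta(X,f)\in[Y,\GModCat]$ vanishes. It then remains to compute that normal invariant in terms of characteristic classes.

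Next I would pin down which summand of the normal invariants contains $\eta(X,f)$. By exactness $\theta\circ\eta=0$, and, as established in the discussion preceding the theorem, the Kervaire--Arf summand $\Zmod 2$ of $[Y,\GModCat]$ maps isomorphically onto the $\Zmod 2$ in $L^s_{4n-2}(\Gamma)$. Hence $\eta(X,f)$ must land in the complementary summand $\bigoplus_i H^{4i}(Y;\Zmod p)$ of Lemma~\ref{lemma:normalinvts}. Using $\GModCat_{(p)}\simeq(BO)_{(p)}$ together with the Eilenberg--MacLane splitting $(BO)_{(p)}\simeq\prod_i K(\Z_{(p)},4i)$ from Lemma~\ref{lemma:2p1-equiv}, this summand is precisely the home of the $p$-local Pontrjagin classes, and the normal invariant of the homotopy equivalence $f$ is detected by the difference $p(X)-f^\star p(Y)$ of the classes computed in Proposition~\ref{proposition:total-class}. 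Consequently $\eta(X,f)=0$ exactly when $f^\star p(Y)=p(X)$, so the hypothesis makes the normal invariant vanish and $f$ is homotopic to a homeomorphism.

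The hard part will be this last identification: matching the geometrically defined normal invariant of $f$ with the algebraically defined difference of Pontrjagin classes. This requires unwinding that $\eta(X,f)$ is classified by the virtual stable bundle $f^\star TY-TX$ and then checking that, after localizing at $p$ and passing through the Eilenberg--MacLane factors of $(BO)_{(p)}$, this virtual bundle is seen exactly by its mod-$p$ Pontrjagin classes in the relevant degrees $4i\le 4n-2$. The hypotheses $p>3$ and $p>n+1$ are what keep these degrees inside the range where Lemmas~\ref{lemma:2p1-equiv} and~\ref{lemma:normalinvts} apply and where no secondary operations intervene, so that the virtual bundle is faithfully recorded by $p(X)-f^\star p(Y)$; once this dictionary is in place, the theorem follows immediately from the injectivity of $\eta$.
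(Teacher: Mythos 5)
Your proposal follows essentially the same route as the paper: the vanishing of $L^s_{4n-1}(\Gamma)$ gives injectivity of the normal invariant map, the Kervaire--Arf $\Zmod{2}$ summand is killed by mapping isomorphically onto the $\Zmod{2}$ in $L^s_{4n-2}(\Gamma)$, and the remaining $p$-torsion summand $\bigoplus_i H^{4i}(Y;\Zmod{p})$ is identified with the structure set and detected by the mod-$p$ Pontrjagin classes via $\GModCat_{(p)}\simeq (BO)_{(p)}$. The final dictionary between the normal invariant of $f$ and the difference $p(X)-f^\star p(Y)$, which you flag as the hard part, is in fact asserted with no more detail in the paper itself, so your write-up is at least as complete as the original.
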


Since $f$ is a homotopy equivalence, the rings $H^\star(X;\Zmod{p})$ and $H^\star(Y;\Zmod{p})$ are isomorphic, and can be computed using knowledge of the $k$-invariant.  Specifically, one can compute the cohomology of $L(p,p;R,Q)$ using the Serre spectral sequence on the Borel fibration $S^{2n-1} \times S^{2n-1} \to L(p,p;R,Q) \to B\Gamma$ and the relationship between the $k$-invariant and the transgression.

\section{Applications and Open Questions}
\label{section:application}

Kwasik--Schultz \cite{MR1876893} classified squares of 3-dimensional lens spaces up to diffeomorphism: for a prime $p > 3$ and rotation numbers $r$ and $q$, there is a diffeomorphism
\[
  L(p;1,r) \times L(p;1,r) \cong L(p;1,q) \times L(p;1,q).
\]
The machinery of this paper can be used to provide a homeomorphism in that case, and also extend the classification to 
products of \textit{different} lens spaces.
\begin{theorem}\label{thm:application}
  For a prime $p > 3$ and rotation numbers $r_1$, $r_2$, $q_1$, $q_2$ such that $\pm (r_1 r_2) / (q_1 q_2)$ is a quadratic residue mod $p$, there
  is a homeomorphism
  $L(p;1,r_1) \times L(p;1,r_2) \cong L(p;1,q_1) \times L(p;1,q_2)$.

  Conversely, if there is such a homeomorphism, then the above condition on rotation numbers is satisfied.
\end{theorem}
\begin{proof}
    Let's first show that they are homotopy equivalent; this was done with somewhat more machinery in \cite{firstpaper} but for completeness, here we present a proof relying only on Proposition~\ref{proposition:homotopy-classification}.  In the notation of Section~\ref{section:constructions},
  \begin{align*}
    L(p;1,r_1) \times L(p;1,r_2) &= L(p,p;R_1,R_2), \\    
    L(p;1,q_1) \times L(p;1,q_2) &= L(p,p;Q_1,Q_2),
  \end{align*}
  where
  \begin{align*}
    R_1 &= (1,r_1,0,0), \\
    R_2 &= (0,0,1,r_2), \\
    Q_1 &= (1,q_1,0,0), \\
    Q_2 &= (0,0,1,q_2). \\
  \end{align*}
  Invoke Proposition~\ref{proposition:homotopy-classification} to
  conclude \(L(p,p;R_1,R_2) \simeq L(p,p;Q_1,Q_2)\).
  The argument proceeds via $k$-invariants.  Specifically,
  \begin{align*}
    k(L(p,p;R_1,R_2)) &= \left( r_1 a^2, r_2 b^2 \right) \in H^4( (\Zp)^2 ; \ZZ^2 ) \\
    k(L(p,p;Q_1,Q_2)) &= \left( q_1 a^2, q_2 b^2 \right) \in H^4( (\Zp)^2 ; \ZZ^2 ) \\
  \end{align*} 
  By the hypothesis, choose $z$ so that
  $r_1 r_2 z^2 = \pm q_1 q_2$.   Set
  $\lambda := q_1 / (r_1 z^2)$ and set
  $\mu := q_2 / r_2$.  Then $\lambda \mu \equiv \pm 1 \mod p$ and since the map 
  $\GL_2(\Z) \to \{ M \in \GL_2(\Z/p) : \det M = \pm 1 \}$ is surjective, there is an automorphism of $\Z^2$ which, when reduced modulo $p$, amounts to multiplication by $\lambda$ and $\mu$ on the two generators, respectively.  Note that $\GL_2(\Z)$ consists of matrices with integer entries and determinant $\pm 1$, and the map $\GL_2(\Z) \to \{ M \in \GL_2(\Z/p) : \det M = \pm 1 \}$ is reduction modulo $p$.
  
  So the $k$-invariant $k(L(p,p;R_1,R_2)) = \left( r_1 a^2, r_2 b^2 \right)$ and the $k$-invariant $\left( \lambda r_1 a^2, \mu r_2 b^2 \right)$ correspond to homotopy equivalent spaces, but the latter $k$-invariant is  $\left( q_1 a^2 / z^2, q_2 b^2 \right)$, which, after changing the chosen generator of $\pi_1$, yields the $k$-invariant $k(L(p,p;Q_1,Q_2)) = \left( q_1 a^2, q_2 b^2 \right)$. 

To finish the homeomorphism classification, we must compare Pontrjagin classes modulo $p$. 
  The total Pontrjagin class of $L(p,p;R_1,R_2)$ is
  \[
  (1+\bar{a}^2)(1+{r_1}^2 \bar{a}^2) (1+\bar{b}^2)(1+{r_2}^2 \bar{b}^2),
  \]
  which is concentrated in $H^4$ since $L(p,p;R_1,R_2)$ 
 is 6-dimensional, so the Pontrjagin class is
  \[
  \bar{a}^2 + {r_1}^2 \bar{a}^2 + \bar{b}^2 + {r_2}^2 \bar{b}^2 \in H^4(L),
  \]
  but $\bar{a}^2$ and $\bar{b}^2$ both vanish, so this Pontrjagin class is zero.  There are other ways to see that this vanishes: twice the Pontrjagin class of $L(p,p;R_1,R_2)$ is twice the product of the Pontrjagin classes of the two lens spaces, but the Pontrjagin class of a 3-dimensional lens spaces is trivial.

  For the converse direction, note that a homeomorphism necessarily results in a homotopy equivalence, and with the discussion of $k$-invariant in Section~6 of \cite{firstpaper}, the condition on the rotation numbers must be satisfied. 
\end{proof}

Theorem~\ref{thm:application} yields a homeomorphism, so a natural question is whether one can produce a diffeomorphism.  One could combine the proof of Theorem~1.1 of \cite{MR1876893} with the simple homotopy equivalence provided by Propositions~\ref{proposition:homotopy-classification} and~\ref{she} to produce a diffeomorphism.

However, several questions remain unanswered and present opportunities for future work.  For instance, theorem~\ref{thm:application} examined the case of $S^3 \times S^3$; a classification in the case of $S^5 \times S^5$ remains open.  In that case, instead of studying pairs of quadratic forms, the homotopy classification involves cubic forms.  And in addition to studying diffeomorphism, one could seek a classification up to \textit{almost} diffeomorphism, meaning up to connected sums with exotic spheres.  It would also be interesting to extend Proposition~\ref{proposition:total-class} to include not only information on the Pontrjagin class, but also information on the action of the group of homotopy self-equivalences on the Pontrjagin class.

\printbibliography

@article{davis-loeffler,
 ISSN = {00029939, 10886826},
 URL = {http://www.jstor.org/stable/2045402},
 author = {James F. Davis and Peter Löffler},
 journal = {Proceedings of the American Mathematical Society},
 number = {2},
 pages = {343--347},
 publisher = {American Mathematical Society},
 title = {A Note on Simple Duality},
 urldate = {2023-08-22},
 volume = {94},
 year = {1985}
}

@article{bak,
author = { Anthony   Bak },
title = {The computation of even dimension surgery groups of odd torsion groups},
journal = {Communications in Algebra},
volume = {6},
number = {14},
pages = {1393-1458},
year  = {1978},
publisher = {Taylor & Francis},
doi = {10.1080/00927877808822298},

URL = { 
        https://doi.org/10.1080/00927877808822298
    
},
eprint = { 
        https://doi.org/10.1080/00927877808822298
    
}

}

@article {MR0244257,
    AUTHOR = {Bass, H. and Milnor, J. and Serre, J.-P.},
     TITLE = {Solution of the congruence subgroup problem for {${\rm
              SL}_{n}\,(n\geq 3)$} and {${\rm Sp}_{2n}\,(n\geq 2)$}},
   JOURNAL = {Inst. Hautes \'Etudes Sci. Publ. Math.},
  FJOURNAL = {Institut des Hautes \'Etudes Scientifiques. Publications
              Math\'ematiques},
    VOLUME = {33},
      YEAR = {1967},
     PAGES = {59--137},
      ISSN = {0073-8301},
   MRCLASS = {14.50 (20.00)},
  MRNUMBER = {0244257},
MRREVIEWER = {R. Steinberg},
       URL = {http://www.numdam.org.proxy.lib.ohio-state.edu/item?id=PMIHES_1967__33__59_0},
}

@article {MR675422,
    AUTHOR = {Chapman, G. R.},
     TITLE = {The cohomology ring of a finite abelian group},
   JOURNAL = {Proc. London Math. Soc. (3)},
  FJOURNAL = {Proceedings of the London Mathematical Society. Third Series},
    VOLUME = {45},
      YEAR = {1982},
    NUMBER = {3},
     PAGES = {564--576},
      ISSN = {0024-6115},
   MRCLASS = {20J06 (20K01)},
  MRNUMBER = {675422},
MRREVIEWER = {Norman Blackburn},
       URL = {https://doi-org.proxy.lib.ohio-state.edu/10.1112/plms/s3-45.3.564},
}

@book {MR2712167,
    AUTHOR = {Long, Jane Holsapple},
     TITLE = {The cohomology rings of the special affine group of {${\mathbb{F}}_p^2$} and of {$PSL(3,p)$}},
      NOTE = {Thesis (Ph.D.)--University of Maryland, College Park},
 PUBLISHER = {ProQuest LLC, Ann Arbor, MI},
      YEAR = {2008},
     PAGES = {93},
      ISBN = {978-0549-77734-2},
   MRCLASS = {Thesis},
  MRNUMBER = {2712167},
       URL =
              {http://gateway.proquest.com/openurl?url_ver=Z39.88-2004&rft_val_fmt=info:ofi/fmt:kev:mtx:dissertation&res_dat=xri:pqdiss&rft_dat=xri:pqdiss:3324760},
}

@article {MR1161304,
    AUTHOR = {Ray, Urmie},
     TITLE = {Free linear actions of finite groups on products of spheres},
   JOURNAL = {J. Algebra},
  FJOURNAL = {Journal of Algebra},
    VOLUME = {147},
      YEAR = {1992},
    NUMBER = {2},
     PAGES = {456--490},
      ISSN = {0021-8693},
   MRCLASS = {20C15 (57S25)},
  MRNUMBER = {1161304},
MRREVIEWER = {David Benson},
       URL = {http://dx.doi.org/10.1016/0021-8693(92)90216-9},
}

@article {MR0172303,
    AUTHOR = {Szczarba, R. H.},
     TITLE = {On tangent bundles of fibre spaces and quotient spaces},
   JOURNAL = {Amer. J. Math.},
  FJOURNAL = {American Journal of Mathematics},
    VOLUME = {86},
      YEAR = {1964},
     PAGES = {685--697},
      ISSN = {0002-9327},
   MRCLASS = {57.30},
  MRNUMBER = {0172303},
MRREVIEWER = {M. F. Atiyah},
       DOI = {10.2307/2373152},
       URL = {http://dx.doi.org/10.2307/2373152},
}

@incollection {MR1747531,
    AUTHOR = {Lance, Timothy},
     TITLE = {Differentiable structures on manifolds},
 BOOKTITLE = {Surveys on surgery theory, {V}ol. 1},
    SERIES = {Ann. of Math. Stud.},
    VOLUME = {145},
     PAGES = {73--104},
 PUBLISHER = {Princeton Univ. Press, Princeton, NJ},
      YEAR = {2000},
   MRCLASS = {57R60 (57R55)},
  MRNUMBER = {1747531},
}

@book {MR0450380,
    AUTHOR = {Serre, Jean-Pierre},
     TITLE = {Linear representations of finite groups},
      NOTE = {Translated from the second French edition by Leonard L. Scott,
              Graduate Texts in Mathematics, Vol. 42},
 PUBLISHER = {Springer-Verlag, New York-Heidelberg},
      YEAR = {1977},
     PAGES = {x+170},
      ISBN = {0-387-90190-6},
   MRCLASS = {20CXX},
  MRNUMBER = {0450380},
MRREVIEWER = {W. Feit},
}

@article{em,
    AUTHOR = {Eilenberg, Samuel and MacLane, Saunders},
     TITLE = {Homology of spaces with operators. {II}},
   JOURNAL = {Trans. Amer. Math. Soc.},
  FJOURNAL = {Transactions of the American Mathematical Society},
    VOLUME = {65},
      YEAR = {1949},
     PAGES = {49--99},
      ISSN = {0002-9947},
   MRCLASS = {56.0X},
  MRNUMBER = {33001},
MRREVIEWER = {B. Eckmann},
       DOI = {10.2307/1990515},
       URL = {https://doi-org.proxy.lib.ohio-state.edu/10.2307/1990515},
}

@article{firstpaper,
      title={{$\mathbb Z_{/p}\times \mathbb Z_{/p}$ actions on $S^n\times S^n$}}, 
      author={Jim Fowler and Courtney Thatcher},
pubstate={forthcoming},
year={2023},
journal = {Algebr. Geom. Topol.},
fjournal = {Algebraic \& Geometric Topology}
}

@article {MR1876893,
    AUTHOR = {Kwasik, Slawomir and Schultz, Reinhard},
     TITLE = {All {$\mathbb{Z}_q$} lens spaces have diffeomorphic squares},
   JOURNAL = {Topology},
  FJOURNAL = {Topology. An International Journal of Mathematics},
    VOLUME = {41},
      YEAR = {2002},
    NUMBER = {2},
     PAGES = {321--340},
      ISSN = {0040-9383},
   MRCLASS = {57R67 (57R50)},
  MRNUMBER = {1876893},
MRREVIEWER = {Ian Hambleton},
       DOI = {10.1016/S0040-9383(00)00039-2},
       URL = {https://doi-org.proxy.lib.ohio-state.edu/10.1016/S0040-9383(00)00039-2},
}

@article {MR2721633,
    AUTHOR = {Thatcher, Courtney M.},
     TITLE = {On free {$\mathbb{Z}_p$} actions on products of spheres},
   JOURNAL = {Geom. Dedicata},
  FJOURNAL = {Geometriae Dedicata},
    VOLUME = {148},
      YEAR = {2010},
     PAGES = {391--415},
      ISSN = {0046-5755},
   MRCLASS = {57S25 (55P15 55P91 57R65)},
  MRNUMBER = {2721633},
MRREVIEWER = {Ronald M. Dotzel},
       DOI = {10.1007/s10711-010-9506-7},
       URL = {http://dx.doi.org/10.1007/s10711-010-9506-7},
}

@article{whiteheadtorsion,
Author = {Milnor, J.},
ISSN = {00029904},
Journal = {Bulletin of the American Mathematical Society},
Keywords = {55.40 Algebraic topology -- Homotopy theory, general, 55.25 Algebraic topology -- Cell-complexes, simplicial complexes, Algebraic topology -- Cell-complexes, simplicial complexes, Algebraic topology -- Homotopy theory, general},
Pages = {358},
Title = {Whitehead torsion.},
Volume = {72},
URL = {https://login.ezproxy.ups.edu:2443/login?url=https://search.ebscohost.com/login.aspx?direct=true&db=msn&AN=MR196736&site=ehost-live&scope=site},
Year = {1966},
}

\end{document}